\begin{document}

\def\theequation{\thesection.\arabic{equation}}
  \makeatletter
  \@addtoreset{equation}{section}
  \makeatother

\newcommand{\olapla}{\overline\Delta}
\newcommand{\onabla}{\overline\nabla}
\newcommand{\wnabla}{\widetilde\nabla}
\newcommand{\lapla}{\Delta}
\newcommand{\nablaf}{\nabla \hspace{-9pt} \nabla \hspace{-7.6pt} \nabla}
\newcommand{\wnablaf}{\widetilde\nabla \hspace{-9pt} \nabla \hspace{-7.6pt} \nabla}
\newcommand{\onablaf}{\overline\nabla \hspace{-9pt} \nabla \hspace{-7.6pt} \nabla}
\newcommand{\fpt}{\frac{\partial}{\partial t}}
\newcommand{\p}{\phi}
\newcommand{\ka}{\kappa}
\newcommand{\g}{\gamma}
\newcommand{\vp}{\varphi}
\newcommand{\pmton}{\p:(M,g)\rightarrow(N,h)}
\newcommand{\met}{\langle \cdot , \cdot \rangle}
\newcommand{\la}{\langle}
\newcommand{\ra}{\rangle}

\newtheorem{conj}{Conjecture}
\newtheorem{thm}{Theorem}[section]
\newtheorem{lem}[thm]{Lemma}
\newtheorem{cor}[thm]{Corollary}
\newtheorem{prop}[thm]{Proposition}
\newtheorem{rem}[thm]{Remark}
\theoremstyle{definition}
\newtheorem{defn}[thm]{Definition}
\newtheorem*{thmA}{Theorem~A}
\newtheorem*{thmB}{Theorem~B}
\newtheorem*{ex}{Example}
\newtheorem{prob}{Problem}

  \title{Triharmonic isometric immersions into a manifold of non-positively constant curvature}
  \title[Triharmonic isometric immersions]
  {Triharmonic isometric immersions into a manifold of non-positively constant curvature}
  \author{Shun Maeta}
  \address{Shumei University, Yachiyo, 276-0003, Japan}
  \email{shun.maeta@@gmail.com}
\author{Nobumitsu Nakauchi}
  \address{Graduate School of Science and Engineering, \newline
  Yamaguchi University, 
  Yamaguchi, 753-8512, Japan}
  \email{nakauchi@@yamaguchi-u.ac.jp}
   \author{Hajime Urakawa}
  \address{Division of Mathematics, Graduate School of Information Sciences, Tohoku University, Aoba 6-3-09, Sendai, 980-8579, Japan}
  \curraddr{Institute for International Education, 
  Tohoku University, Kawauchi 41, Sendai 980-8576, Japan}
  \email{urakawa@@math.is.tohoku.ac.jp}
    \keywords{harmonic map, triiharmonic map, Chen's conjecture, generalized Chen's conjecture}
  \subjclass[2000]{primary 58E20, secondary 53C43}
  \thanks{
  Supported by the Grant-in-Aid for the Scientific Research, (C) 
  No 24540213, No. 25400154, and the Grant-in-Aid for Research Activity Start-up, No. 25887044, 
  Japan Society for the Promotion of Science. 
  }
\maketitle

\begin{abstract}
 A triharmonic map is a critical point of the 3-energy in the space of smooth maps between two Riemannian manifold.  
  We study the generalized Chen's conjecture for a triharmonic isometric immersion $\varphi$ into a space form of non-positively constant curvature. We show that if the domain is complete and both 
the 4-energy of $\varphi$, and the $L^4$-norm of the tension field $\tau(\varphi)$, 
are finite, then such an immersion $\varphi$ is minimal.
 \end{abstract}


\qquad\\

\section{Introduction}
Harmonic maps play a central role in geometry;\,they are critical points of the energy functional 
$E(\varphi)=\frac12\int_M\vert d\varphi\vert^2\,v_g$ 
for smooth maps $\varphi$ of $(M,g)$ into $(N,h)$. The Euler-Lagrange equations are given by the vanishing of the tension filed 
$\tau(\varphi)$. 
In 1983, J. Eells and L. Lemaire \cite{EL2} extended the notion of harmonic map to  
$k$-harmonic map, which are, 
by definition, 
critical points of the $k$-energy functional
\begin{equation}
E_k(\varphi)=\frac12\int_M
\vert(d+\delta)^k\varphi\vert^2\,v_g.
\end{equation}
For $k=1$, $E_1=E$, and for $k=2$, 
after G.Y. Jiang \cite{J} studied the first and second variation formulas of 
$E_2$ $(k=2)$, 
extensive studies in this area have been done
(for instance, see 
 \cite{II}, \cite{CMP}, \cite{LO}, \cite{O1}, \cite{S1}, \cite{MO1},  
 \cite{LO2}, \cite{BFO}, \cite{IIU2}, \cite{IIU}, 
 etc.). Notice that harmonic maps are always biharmonic by definition. 
\par
For harmonic maps, it is well known that: 
\par
{\em If a domain manifold $(M,g)$ is complete and has non-negative Ricci curvature, and the sectional curvature of a target manifold $(N,h)$ is non-positive, then 
every energy finite harmonic 
map is a constant map} (cf. \cite{SY}). 
\par
Therefore, it is a natural question to consider 
$k$-harmonic maps into a Riemannian manifold of non-positive curvature. 
In this connection, Baird, Fardoun and Ouakkas (cf. \cite{BFO}) showed that:  
\par
{\em If a non-compact Riemannian manifold 
$(M,g)$ is complete and has non-negative Ricci curvature and $(N,h)$ has non-positive sectional curvature, then every bienergy finite biharmonic map of $(M,g)$ into 
$(N,h)$ is harmonic}.  
\vskip0.3cm\par
In our previous paper (\cite{NU3}), we showed without the Ricci curvature condition of $(M,g)$, that 
\begin{thm} 
Under only the assumptions of completeness of 
$(M,g)$ and non-positivity of curvature of $(N,h)$,  \par
$(1)$ every biharmonic map 
$\varphi:\,(M,g)\rightarrow (N,h)$ with 
finite energy and 
finite bienergy 
must be harmonic. 
\par
$(2)$ In the case ${\rm Vol}(M,g)=\infty$, 
every biharmonic map 
$\varphi:\,(M,g)\rightarrow (N,h)$ with finite bienergy 
is harmonic. 
\end{thm}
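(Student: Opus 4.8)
The plan is to extract from the biharmonic equation a Bochner-type inequality for $|\tau(\varphi)|^2$ and then run a cut-off argument built solely from the distance function on the complete manifold $(M,g)$, letting the volume enter only at the very end. Recall that biharmonicity of $\varphi$ reads $\overline{\Delta}\,\tau(\varphi)=\sum_i R^N(\tau(\varphi),d\varphi(e_i))d\varphi(e_i)$, where $\{e_i\}$ is a local orthonormal frame on $M$ and $\overline{\Delta}=-\operatorname{trace}_g(\nabla^\varphi)^2$ is the connection Laplacian on sections of $\varphi^{-1}TN$, normalized to have non-negative spectrum. Pairing this identity with $\tau(\varphi)$ and using that the sectional curvature of $(N,h)$ is non-positive gives the pointwise inequality $\langle\overline{\Delta}\,\tau(\varphi),\tau(\varphi)\rangle\le 0$.

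Next I would fix $x_0\in M$ and, for each $R>0$, choose a Lipschitz cut-off $\eta=\eta_R$ with $\eta\equiv 1$ on the geodesic ball $B_R(x_0)$, $\operatorname{supp}\eta\subset B_{2R}(x_0)$ and $|\nabla\eta|\le C/R$; such functions exist because $(M,g)$ is complete. Multiplying the displayed inequality by $\eta^2\tau(\varphi)$, integrating over $M$ and integrating by parts yields
\begin{equation}
\int_M\eta^2\,|\nabla^\varphi\tau(\varphi)|^2\,v_g+2\int_M\eta\,\langle\nabla^\varphi_{\nabla\eta}\tau(\varphi),\tau(\varphi)\rangle\,v_g\le 0,
\end{equation}
and Young's inequality applied to the cross term absorbs half of the first term, so that
\begin{equation}
\frac12\int_M\eta^2\,|\nabla^\varphi\tau(\varphi)|^2\,v_g\le 2\int_M|\nabla\eta|^2\,|\tau(\varphi)|^2\,v_g\le\frac{C'}{R^2}\int_M|\tau(\varphi)|^2\,v_g.
\end{equation}
Finiteness of the bienergy means precisely $\int_M|\tau(\varphi)|^2\,v_g<\infty$, so letting $R\to\infty$ forces $\nabla^\varphi\tau(\varphi)\equiv 0$; hence $|\tau(\varphi)|$ equals a constant $c\ge 0$ on the connected manifold $M$. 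This already proves $(2)$: if $\operatorname{Vol}(M,g)=\infty$, then $c^2\operatorname{Vol}(M,g)=\int_M|\tau(\varphi)|^2\,v_g<\infty$ forces $c=0$, so $\varphi$ is harmonic; the same remark disposes of the infinite-volume subcase of $(1)$.

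To finish $(1)$ when $\operatorname{Vol}(M,g)<\infty$, I would introduce the vector field $X$ on $M$ determined by $g(X,Y)=h(d\varphi(Y),\tau(\varphi))$ for all $Y$. Using $\nabla^\varphi\tau(\varphi)=0$ from the previous step one computes $\operatorname{div}X=|\tau(\varphi)|^2=c^2$, while $|X|\le|d\varphi|\,|\tau(\varphi)|=c\,|d\varphi|$, so the finite-energy hypothesis gives $X\in L^2(M)$. Pairing $\operatorname{div}X$ with $\eta_R^2$ and using Cauchy--Schwarz,
\begin{equation}
c^2\int_M\eta_R^2\,v_g=-2\int_M\eta_R\,\langle\nabla\eta_R,X\rangle\,v_g\le\frac{2C}{R}\,\operatorname{Vol}(M,g)^{1/2}\Big(\int_M|X|^2\,v_g\Big)^{1/2},
\end{equation}
and the right-hand side tends to $0$ as $R\to\infty$ precisely because $\operatorname{Vol}(M,g)$ is finite, while the left-hand side tends to $c^2\operatorname{Vol}(M,g)$. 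Hence $c=0$ and $\varphi$ is harmonic.

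The integration-by-parts manipulations, the smoothness of the biharmonic map (by elliptic regularity), and the construction of the cut-offs $\eta_R$ are all standard on a complete manifold. The point requiring genuine care is that the Bochner step yields only $|\tau(\varphi)|\equiv c$; turning this into $\tau(\varphi)=0$ is exactly where the statement splits into its two cases, and the finite-energy hypothesis is used only through the auxiliary vector field $X$ in the finite-volume situation. I expect the main, though modest, obstacle to be checking that every integral produced along the way is finite, so that the divergence theorem against the cut-offs is legitimate at each stage.
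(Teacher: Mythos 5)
Your argument is correct and follows essentially the same route as the authors' (the theorem is quoted here from \cite{NU3}, and the same machinery reappears in the proof of Theorem 2.4): a cut-off function with $|\nabla\eta|\le C/R$ plus Young's inequality to absorb the cross term and conclude $\overline{\nabla}\tau(\varphi)\equiv 0$ from the non-positivity of the sectional curvature and the finiteness of the bienergy, and then the auxiliary vector field $X$ with $\operatorname{div}X=|\tau(\varphi)|^2$ to kill the constant $|\tau(\varphi)|$. The only cosmetic difference is that you dispatch the last step by a second cut-off computation split according to $\operatorname{Vol}(M,g)$, whereas the original invokes Gaffney's theorem for the $L^1$ vector field $X$; the two are interchangeable here.
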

\vskip0.3cm\par
\par
We obtained 
(cf. \cite{NU1}, \cite{NU2}, \cite{NU3}) 
\begin{thm} 
Assume that $(M,g)$ is a complete 
Riemannian manifold, and let 
$\varphi:\,(M,g)\rightarrow (N,h)$ 
is an isometric immersion, and the sectional curvature of $(N,h)$ is non-positive. 
If $\varphi:\,(M,g)\rightarrow (N,h)$ is biharmonic 
and $\int_M\vert{\bf H}\vert^2\,v_g<\infty$, then 
it is minimal. 
Here, ${\bf H}$ is the mean curvature normal vector field of the isometric immersion $\varphi$. 
\end{thm}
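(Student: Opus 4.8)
The plan is to extract from the biharmonic equation a one--sided $L^2$ bound for $\widetilde\nabla\tau(\varphi)$ by testing it against $\tau(\varphi)$ with a compactly supported cutoff, using the non--positivity of the curvature of $(N,h)$ to fix the sign, and then to invoke a pointwise inequality special to isometric immersions to pass from ``$\tau(\varphi)$ parallel'' to ``$\tau(\varphi)\equiv 0$''. Throughout, $m=\dim M$, $\{e_i\}$ is a local orthonormal frame on $(M,g)$, $\widetilde\nabla$ denotes the induced connection on $\varphi^{-1}TN$, and $\Delta^\varphi$ is the rough Laplacian on $\varphi^{-1}TN$ normalized to be non--negative, i.e.\ $\int_M\langle\Delta^\varphi V,W\rangle\,v_g=\int_M\langle\widetilde\nabla V,\widetilde\nabla W\rangle\,v_g$ whenever $W$ has compact support. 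The first step is to test the biharmonic equation $\tau_2(\varphi)=0$ against $\tau(\varphi)$, which gives pointwise on $M$ the identity $\langle\Delta^\varphi\tau(\varphi),\tau(\varphi)\rangle=\sum_{i=1}^m\langle R^N(\tau(\varphi),d\varphi(e_i))d\varphi(e_i),\tau(\varphi)\rangle$. Here the isometric immersion hypothesis is used: $\tau(\varphi)=m{\bf H}$ is normal to $\varphi(M)$, so each summand on the right equals the sectional curvature of the $2$--plane spanned by $\tau(\varphi)$ and the unit vector $d\varphi(e_i)$ times $|\tau(\varphi)|^2$, and the hypothesis on $(N,h)$ therefore yields the \emph{pointwise} inequality $\langle\Delta^\varphi\tau(\varphi),\tau(\varphi)\rangle\le 0$.

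Next I would run a cutoff argument. Fix $o\in M$; since $(M,g)$ is complete, for every $r>0$ there is a Lipschitz function $\eta=\eta_r$ with $0\le\eta\le 1$, $\eta\equiv 1$ on the geodesic ball $B_r(o)$, $\eta\equiv 0$ off $B_{2r}(o)$ and $|\nabla\eta|\le c/r$ almost everywhere (constructed from the distance function, $\overline{B_{2r}(o)}$ being compact by Hopf--Rinow). Integrating by parts against the compactly supported section $\eta^2\tau(\varphi)$ and inserting the pointwise inequality gives
\[
\int_M\eta^2\,|\widetilde\nabla\tau(\varphi)|^2\,v_g
+2\int_M\eta\,\langle\widetilde\nabla_{\nabla\eta}\tau(\varphi),\tau(\varphi)\rangle\,v_g
=\int_M\eta^2\,\langle\Delta^\varphi\tau(\varphi),\tau(\varphi)\rangle\,v_g\le 0 .
\]
Bounding the cross term by $\big|2\int_M\eta\langle\widetilde\nabla_{\nabla\eta}\tau(\varphi),\tau(\varphi)\rangle v_g\big|\le\frac12\int_M\eta^2|\widetilde\nabla\tau(\varphi)|^2v_g+2\int_M|\nabla\eta|^2|\tau(\varphi)|^2v_g$ (Cauchy--Schwarz and Young) and absorbing, I get $\frac12\int_M\eta^2|\widetilde\nabla\tau(\varphi)|^2v_g\le 2\int_M|\nabla\eta|^2|\tau(\varphi)|^2v_g$. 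Since $\nabla\eta_r$ is supported in the annulus $B_{2r}(o)\setminus B_r(o)$ and $\int_M|\tau(\varphi)|^2v_g=m^2\int_M|{\bf H}|^2v_g<\infty$, the right--hand side tends to $0$ as $r\to\infty$, whence $\int_M|\widetilde\nabla\tau(\varphi)|^2v_g=0$; that is, $\tau(\varphi)$ is parallel. Note that only completeness of $(M,g)$ and finiteness of $\int_M|{\bf H}|^2v_g$ are used here --- no hypothesis on $\mathrm{Vol}(M,g)$ or on the energy of $\varphi$.

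Finally I would invoke the inequality valid for any isometric immersion: if $B$ is the second fundamental form, differentiating $\langle\tau(\varphi),d\varphi(e_j)\rangle=0$ gives $\langle\widetilde\nabla_{e_i}\tau(\varphi),d\varphi(e_j)\rangle=-\langle B(e_i,e_j),\tau(\varphi)\rangle$, so comparing $\widetilde\nabla_{e_i}\tau(\varphi)$ with its tangential part and applying Cauchy--Schwarz to the trace of the symmetric array $\big(\langle B(e_i,e_j),\tau(\varphi)\rangle\big)$, whose trace is $|\tau(\varphi)|^2$, one obtains $|\widetilde\nabla\tau(\varphi)|^2\ge\sum_{i,j}\langle B(e_i,e_j),\tau(\varphi)\rangle^2\ge\frac1m|\tau(\varphi)|^4$ on $M$. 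Combined with $\widetilde\nabla\tau(\varphi)\equiv 0$ from the cutoff argument, this forces $|\tau(\varphi)|\equiv 0$, i.e.\ ${\bf H}\equiv 0$, so $\varphi$ is minimal.

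The delicate point is making the first step watertight, i.e.\ being sure that testing the biharmonic equation with $\tau(\varphi)$ really produces $\langle\Delta^\varphi\tau(\varphi),\tau(\varphi)\rangle\le 0$ and not the opposite sign: this is exactly where the non--positivity of the sectional curvature of $(N,h)$ is indispensable, and it is what turns the integrated Bochner-type identity of the second step into a usable one--sided estimate. The last step is what makes the statement genuinely about isometric immersions rather than arbitrary maps --- a general biharmonic map would only be shown to have parallel tension field, after which extra hypotheses would be needed (compare Theorem 1.1) --- since here the pointwise inequality $|\widetilde\nabla\tau(\varphi)|^2\ge\frac1m|\tau(\varphi)|^4$ finishes the argument immediately.
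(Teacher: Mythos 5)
Your argument is correct, and it is essentially the proof the authors give for this theorem in the cited works \cite{NU2}, \cite{NU3} (the theorem is only quoted here, but its proof is mirrored by the cutoff--Young--curvature-sign scheme of Section 3): test $\tau_2(\varphi)=0$ against $\eta^2\tau(\varphi)$, use non-positive sectional curvature and $\int_M\vert\tau(\varphi)\vert^2\,v_g<\infty$ to conclude $\overline{\nabla}\tau(\varphi)\equiv 0$, then use the immersion inequality $\vert\overline{\nabla}\tau(\varphi)\vert^2\geq\frac{1}{m}\vert\tau(\varphi)\vert^4$ to force $\tau(\varphi)\equiv 0$. All steps check, including the pointwise sign of $\langle\overline{\Delta}\tau(\varphi),\tau(\varphi)\rangle$ and the Cauchy--Schwarz estimate on the trace of $\langle B(e_i,e_j),\tau(\varphi)\rangle$.
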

\vskip0.3cm\par
The above theorems 
gave an affirmative answer to the generalized B.Y. Chen's conjecture (cf. \cite{CMP})
under natural conditions:

\begin{center}
\lq\lq Every biharmonic isometric immersion into a Riemannian manifold\\ of non-positive curvature must be harmonic.\rq\rq
\end{center}
\vskip0.2cm\par

Now in this paper, we will discuss the $3$-energy $E_3$, ($k=3$) and 
a {\em triharmonic} map which is a critical point of the $3$-energy $E_3$ 
in the space of smooth maps of $M$ into $N$. 
We first show (cf. Theorem 2.1) the first variational formula of triharmonic maps which is of simple form in the case of 
an isometric immersion into the Riemannian manifold 
of constant curvature (cf. Corollary 2.3). 
Then, we want to show that the generalized Chen's conjecture is true for a triharmonic isometric immersion into a Riemannian manifold of non-positive curvature. More precisely, we will show that 
\begin{thm}[(cf. Theorem 2.4 and \ref{sub Th})]
Assume that $\varphi:\,(M,g)\rightarrow N(c)$ is an isometric immersion of a complete Riemannian manifold $(M,g)$ into 
a Riemannian manifold $N(c)$ of non-positively constant curvature $c$. 
In the case $c<0$, 
if $\varphi$ is triharmonic and both the extended $4$-energy $\widetilde{E}_4(\varphi)=\frac12\int_M\vert \overline{\Delta}\tau(\varphi)\vert^2\,v_g$ and the $L^4$-norm $\int_M\vert \tau(\varphi)\vert^4\,v_g$ are finite, 
then $\varphi$ 
is minimal. 
\par
In the case $c=0$, the same conclusion holds if we assume more
$E_2(\varphi)=\frac12\int_M\vert \tau(\varphi)\vert^2\,v_g<\infty$ or $E_3(\varphi)=\frac12\int_M\vert\overline{\nabla}\tau(\varphi)\vert^2\,v_g<\infty$.  
\end{thm}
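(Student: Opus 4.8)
\emph{Plan of the proof.} I would derive both cases from the Euler--Lagrange equation of Corollary 2.3 together with a Gaffney-type integration by parts on the complete manifold $(M,g)$, in the spirit of \cite{NU3}. The first observation is that, since $N(c)$ has constant sectional curvature $c$, for every section $\xi$ of $\vp^{-1}TN$ one has $\sum_{i}R^{N}(\xi,d\vp(e_{i}))d\vp(e_{i})=c\bigl(m\,\xi-\xi^{\top}\bigr)$, where $m=\dim M$ and $\xi^{\top}$ is the component of $\xi$ tangent to the immersion; in particular this is $c\,m\,\tau(\vp)$ when $\xi=\tau(\vp)$, since $\tau(\vp)$ is normal. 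Feeding this into the tritension field, $\tau_{3}(\vp)=0$ becomes a sixth-order equation for $\tau(\vp)$ whose leading term is $\olapla^{2}\tau(\vp)$, all of whose remaining terms carry an explicit factor $c$ (multiplying expressions polynomial in $\tau(\vp)$, $\onabla\tau(\vp)$, $\olapla\tau(\vp)$ and the second fundamental form), and which contains, from the iterated curvature contribution, a term proportional to $c^{2}m^{2}\tau(\vp)$.

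Next, fix $x_{0}\in M$ and take cut-off functions $\eta=\eta_{r}\in C_{c}^{\infty}(M)$ with $0\le\eta_{r}\le1$, $\eta_{r}\equiv1$ on $B_{r}(x_{0})$, $\operatorname{supp}\eta_{r}\subset B_{2r}(x_{0})$, $|d\eta_{r}|\le C/r$ and $|\onabla d\eta_{r}|\le C/r^{2}$. Multiply the pointwise identity $\la\tau_{3}(\vp),\tau(\vp)\ra=0$ by $\eta_{r}^{\,2}$, integrate over $M$, and integrate by parts, moving the two outer derivatives of $\olapla^{2}$ onto $\eta_{r}^{\,2}\tau(\vp)$ and using self-adjointness of $\olapla$. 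The leading term produces $\int_{M}\eta_{r}^{\,2}|\olapla\tau(\vp)|^{2}\,v_g$; the curvature terms, after one more integration by parts and using $c\le0$, produce $-c$ times nonnegative integrals (in particular a positive multiple of $\int_{M}\eta_{r}^{\,2}|\onabla\tau(\vp)|^{2}\,v_g$), together with $c^{2}m^{2}\int_{M}\eta_{r}^{\,2}|\tau(\vp)|^{2}\,v_g$ and a nonnegative quartic integral in $\tau(\vp)$; the genuinely lower-order terms of the equation contribute no boundary error, since they carry no derivative crossing $\eta_{r}$, and are controlled by $\int_{M}|\tau(\vp)|^{4}\,v_g$. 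Everything else is an ``annular'' error supported on $B_{2r}(x_{0})\setminus B_{r}(x_{0})$ and weighted by $d\eta_{r}$ or $\onabla d\eta_{r}$.

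The heart of the matter is that the annular errors are negligible as $r\to\infty$. Each is, by Cauchy--Schwarz and Young, bounded by $Cr^{-1}$ or $Cr^{-2}$ times a product of $L^{2}$-masses over $B_{2r}(x_{0})\setminus B_{r}(x_{0})$ of $\olapla\tau(\vp)$, $\onabla\tau(\vp)$, $\tau(\vp)$ or $|\tau(\vp)|^{2}$. When $c<0$, the dangerous ones (those involving $\onabla\tau(\vp)$ or $\tau(\vp)$) are \emph{absorbed} into the coercive terms $(-c)\int_{M}\eta_{r}^{\,2}|\onabla\tau(\vp)|^{2}\,v_g$ and $c^{2}m^{2}\int_{M}\eta_{r}^{\,2}|\tau(\vp)|^{2}\,v_g$ on the left, at the cost of an extra $r^{-2}$ that converts what is left into a tail $\int_{B_{2r}(x_{0})\setminus B_{r}(x_{0})}|\olapla\tau(\vp)|^{2}\,v_g$ or $\int_{B_{2r}(x_{0})\setminus B_{r}(x_{0})}|\tau(\vp)|^{4}\,v_g$ of a convergent integral, hence $\to0$ by the hypotheses $\widetilde{E}_{4}(\vp)<\infty$ and $\int_{M}|\tau(\vp)|^{4}\,v_g<\infty$. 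Passing to the limit, the resulting inequality forces the nonnegative terms to vanish: $\olapla\tau(\vp)\equiv0$, $\onabla\tau(\vp)\equiv0$, and the quartic term $\equiv0$; substituting $\onabla\tau(\vp)=0$ back into the equation of Corollary 2.3 and pairing with $\tau(\vp)$, the only surviving contributions are (a nonzero multiple of) $c^{2}m^{2}|\tau(\vp)|^{2}$ together with curvature-times-second-fundamental-form terms, and exactly as in the biharmonic case (where $|A_{\mathbf H}|^{2}=mc|\mathbf H|^{2}$ forces $\mathbf H=0$ when $c<0$) the rigidity of the $c<0$ equation yields $\tau(\vp)\equiv0$, i.e.\ $\vp$ is minimal. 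When $c=0$ the coercive curvature terms are absent: one only gets $\int_{M}\eta_{r}^{\,2}|\olapla\tau(\vp)|^{2}\,v_g\le(\text{annular errors})$, and now the $\onabla\tau(\vp)$-errors can no longer be absorbed, so one invokes $E_{2}(\vp)<\infty$ or $E_{3}(\vp)<\infty$ to kill them; this gives $\olapla\tau(\vp)\equiv0$, whence, by the Bochner formula, $\tfrac12\lapla|\tau(\vp)|^{2}=|\onabla\tau(\vp)|^{2}\ge0$, so $|\tau(\vp)|^{2}$ is subharmonic on $(M,g)$ and lies in $L^{1}(M)$ when $E_{2}<\infty$, resp.\ $|\onabla\tau(\vp)|^{2}\in L^{1}(M)$ when $E_{3}<\infty$; a Yau/Karp-type Liouville theorem then makes $|\tau(\vp)|^{2}$ constant and $\onabla\tau(\vp)\equiv0$, and substituting back into the flat equation forces $\tau(\vp)\equiv0$.

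The main obstacle is precisely this Gaffney/absorption step: because $(M,g)$ is an \emph{arbitrary} complete Riemannian manifold, with no assumption on its volume growth or curvature, one must verify that \emph{every} boundary integral created by cutting $\vp$ off at infinity is genuinely dominated by the finitely many integrals assumed finite. The dichotomy in the statement reflects exactly this: for $c<0$ the bad error terms are absorbed into the coercive curvature terms provided by the negative curvature of $N(c)$, whereas for $c=0$ these terms vanish and must be compensated by the additional finiteness of $E_{2}(\vp)$ or of $E_{3}(\vp)$. A secondary, more routine point is the final algebraic bootstrap from $\onabla\tau(\vp)\equiv0$ to $\tau(\vp)\equiv0$, which relies on the constant-curvature form of the equation (and, for $c<0$, on $c\neq0$).
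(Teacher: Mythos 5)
There is a genuine gap, and it lies exactly where you located the ``heart of the matter'': the choice of test section. You pair $\tau_3(\varphi)=0$ with $\eta_r^{2}\,\tau(\varphi)$, which forces you to integrate the leading term $\overline{\Delta}{}^{2}\tau(\varphi)$ by parts \emph{twice}. This produces an annular error of the form $\int_M (\Delta\eta_r^{2})\,\langle\overline{\Delta}\tau(\varphi),\tau(\varphi)\rangle\,v_g$, and this term cannot be controlled under the stated hypotheses: (i) a Hessian bound $\vert\overline{\nabla}d\eta_r\vert\le C/r^{2}$ for cut-offs is \emph{not} available on an arbitrary complete manifold (it requires a lower Ricci bound on $M$, which the theorem deliberately avoids); (ii) even granting it, the error is bounded by $Cr^{-2}\Vert\overline{\Delta}\tau(\varphi)\Vert_{L^{2}(\mathrm{ann})}\Vert\tau(\varphi)\Vert_{L^{2}(\mathrm{ann})}$, and the hypotheses give $\tau(\varphi)\in L^{4}$, not $L^{2}$; converting via H\"older costs a factor $\mathrm{Vol}(B_{2r})^{1/4}$, which need not be $o(r^{2})$ (think of exponential volume growth); (iii) the coercive term you propose to absorb it into, $c^{2}m^{2}\int\eta_r^{2}\vert\tau(\varphi)\vert^{2}v_g$, does not exist: the tritension field is $\tau_3(\varphi)=\overline{\Delta}{}^{2}\tau(\varphi)-\sum_i R^N(\overline{\Delta}\tau(\varphi),d\varphi(e_i))d\varphi(e_i)-c\,\vert\tau(\varphi)\vert^{2}\tau(\varphi)$ (Corollary 2.3); there is no iterated curvature term producing $c^{2}m^{2}\tau(\varphi)$. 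The closing appeal to ``$\vert A_{\mathbf H}\vert^{2}=mc\vert\mathbf H\vert^{2}$'' is likewise not a correct identity and is not needed.

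The paper's proof avoids all of this by testing against $\eta^{2}\,\overline{\Delta}\tau(\varphi)$ instead. Then the leading term needs only \emph{one} integration by parts, giving $\int\eta^{2}\vert\overline{\nabla}\,\overline{\Delta}\tau(\varphi)\vert^{2}v_g$ with a cross term weighted by $\vert\nabla\eta\vert^{2}\vert\overline{\Delta}\tau(\varphi)\vert^{2}$, i.e.\ controlled exactly by $\widetilde{E}_4(\varphi)<\infty$ and first derivatives of $\eta$ only; the curvature term $-\langle\mathcal{R}(\overline{\Delta}\tau(\varphi)),\overline{\Delta}\tau(\varphi)\rangle$ is pointwise nonnegative with no integration by parts at all; and the cubic term is rewritten via $\langle\tau(\varphi),\overline{\Delta}\tau(\varphi)\rangle=\tfrac12\Delta\vert\tau(\varphi)\vert^{2}+\vert\overline{\nabla}\tau(\varphi)\vert^{2}$ and $\vert\tau(\varphi)\vert^{2}\Delta\vert\tau(\varphi)\vert^{2}=\tfrac12\Delta\vert\tau(\varphi)\vert^{4}+\vert\nabla\vert\tau(\varphi)\vert^{2}\vert^{2}$, so that its only annular error is weighted by $\vert\tau(\varphi)\vert^{4}\vert\nabla\eta\vert^{2}$ and is killed by $\int_M\vert\tau(\varphi)\vert^{4}v_g<\infty$. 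Letting $r\to\infty$ yields $\overline{\nabla}\,\overline{\Delta}\tau(\varphi)=0$, and for $c<0$ also $\nabla\vert\tau(\varphi)\vert^{2}=0$ and $\vert\tau(\varphi)\vert^{2}\vert\overline{\nabla}\tau(\varphi)\vert^{2}=0$, from which $\tau(\varphi)=0$ follows by a short case analysis plugged back into the Euler--Lagrange equation. Note also that in the case $c=0$ the paper does \emph{not} obtain $\overline{\Delta}\tau(\varphi)=0$ directly; it only gets that $\vert\overline{\Delta}\tau(\varphi)\vert$ is constant, and then runs a volume dichotomy (using $E_2<\infty$ when $\mathrm{Vol}(M,g)=\infty$, and $E_3<\infty$ plus the results of \cite{NU4} when $\mathrm{Vol}(M,g)<\infty$), a structure your sketch does not reproduce. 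If you want to salvage your plan, the fix is precisely to replace the test section $\eta^{2}\tau(\varphi)$ by $\eta^{2}\overline{\Delta}\tau(\varphi)$ so that every annular error is weighted only by $\vert\nabla\eta\vert^{2}$ against one of the two quantities assumed integrable.
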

\vskip0.6cm\par
\section{Preliminaries and statement of main theorem}
In this section, we prepare materials for the first and second variational formulas for the bienergy functional and biharmonic maps. 
Let us recall the definition of a harmonic map $\varphi:\,(M,g)\rightarrow (N,h)$, of a compact Riemannian manifold $(M,g)$ into another Riemannian manifold $(N,h)$, 
which is an extremal 
of the {\em energy functional} defined by 
$$
E(\varphi)=\int_Me(\varphi)\,v_g, 
$$
where $e(\varphi):=\frac12\vert d\varphi\vert^2$ is called the energy density 
of $\varphi$.  
That is, for any variation $\{\varphi_t\}$ of $\varphi$ with 
$\varphi_0=\varphi$, 
\begin{equation}
\frac{d}{dt}\bigg\vert_{t=0}E(\varphi_t)=-\int_Mh(\tau(\varphi),V)v_g=0,
\end{equation}
where $V\in \Gamma(\varphi^{-1}TN)$ is a variation vector field along $\varphi$ which is given by 
$V(x)=\frac{d}{dt}\vert_{t=0}\varphi_t(x)\in T_{\varphi(x)}N$, 
$(x\in M)$, 
and  the {\em tension field} is given by 
$\tau(\varphi)
=\sum_{i=1}^mB(\varphi)(e_i,e_i)\in \Gamma(\varphi^{-1}TN)$, 
where 
$\{e_i\}_{i=1}^m$ is a locally defined frame field on $(M,g)$, 
and $B(\varphi)$ is the second fundamental form of $\varphi$ 
defined by 
\begin{align}
B(\varphi)(X,Y)&=(\widetilde{\nabla}d\varphi)(X,Y)\nonumber\\
&=(\widetilde{\nabla}_Xd\varphi)(Y)\nonumber\\
&=\overline{\nabla}_X(d\varphi(Y))-d\varphi(\nabla_XY),
\end{align}
for all vector fields $X, Y\in {\frak X}(M)$. 
Here, 
$\nabla$, and
$\nabla^N$, 
 are Levi-Civita connections on $TM$, $TN$  of $(M,g)$, $(N,h)$, respectively, and 
$\overline{\nabla}$, and $\widetilde{\nabla}$ are the induced ones on $\varphi^{-1}TN$, and $T^{\ast}M\otimes \varphi^{-1}TN$, respectively. By (2.1), $\varphi$ is harmonic if and only if $\tau(\varphi)=0$. 
\par
The second variation formula is given as follows. Assume that 
$\varphi$ is harmonic. 
Then, 
\begin{equation}
\frac{d^2}{dt^2}\bigg\vert_{t=0}E(\varphi_t)
=\int_Mh(J(V),V)v_g, 
\end{equation}
where 
$J$ is an elliptic differential operator, called the
{\em Jacobi operator}  acting on 
$\Gamma(\varphi^{-1}TN)$ given by 
\begin{equation}
J(V)=\overline{\Delta}V-{\mathcal R}(V),
\end{equation}
where 
$\overline{\Delta}V=\overline{\nabla}^{\ast}\overline{\nabla}V
=-\sum_{i=1}^m\{
\overline{\nabla}_{e_i}\overline{\nabla}_{e_i}V-\overline{\nabla}_{\nabla_{e_i}e_i}V
\}$ 
is the {\em rough Laplacian} and 
${\mathcal R}$ is a linear operator on $\Gamma(\varphi^{-1}TN)$
given by 
${\mathcal R}(V)=
\sum_{i=1}^mR^N(V,d\varphi(e_i))d\varphi(e_i)$,
and $R^N$ is the curvature tensor of $(N,h)$ given by 
$R^N(U,V)=\nabla^N{}_U\nabla^N{}_V-\nabla^N{}_V\nabla^N{}_U-\nabla^N{}_{[U,V]}$ for $U,\,V\in {\frak X}(N)$.   
\par
J. Eells and L. Lemaire \cite{EL2} proposed polyharmonic ($k$-harmonic) maps 
and 
Jiang \cite{J} studied the first and second variation formulas of biharmonic maps. 
Let us consider the {\em $k$-energy} defined by 
\begin{align}
E_k(\varphi)&=\frac12\int_M\vert (d+\delta)^k\,\varphi\vert^2\,v_g
\end{align}
for a smooth map $\varphi$ from $M$ into $N$, and 
$\varphi$ is called {\em $k$-harmonic} if it is a critical point of 
$E_k$ $(k=1,2,3,\cdots)$.  
Here, we also define  
the {\em extended $k$-energy} $\widetilde{E}_k$ is given (cf. \cite{IIU}, p.270) as follows:
\begin{align}
\widetilde{E}_k(\varphi)=\left\{
\begin{aligned}
&\int_M\vert W^{\ell}_{\varphi}\vert^2\,v_g\qquad\quad (k=2\ell),\\
&\int_M\vert\overline{\nabla}W^{\ell}_{\varphi}\vert^2\,v_g\qquad(k=2\ell+1)
\end{aligned}
\right.
\end{align}
where $W^{\ell}_{\varphi}$ is given by 
\begin{align}
W^{\ell}_{\varphi}=\underbrace{\overline{\Delta}\cdots\overline{\Delta}}_{\ell-1}\tau(\varphi)
\end{align} 
if $\ell\geq 1$. 
Notice that 
$E_k(\varphi)=\widetilde{E}_k(\varphi)$ $(k=1,2,3)$, but 
for $k=4$, 
it holds that 
\begin{align}
E_4(\varphi)&=\frac12\int_M\vert (d+\delta)(d+\delta)\tau(\varphi)\vert^2\,v_g
\nonumber\\
&=\frac12\int_M\vert d\,d\,\tau(\varphi)\vert^2\,v_g+\frac12\int_M\vert\overline{\Delta}\tau(\varphi)\vert^2\,v_g\nonumber\\
&=\frac12\int_M\vert d\,d\,\tau(\varphi)\vert^2\,v_g+\widetilde{E}_4(\varphi).
\end{align}
If $\ell=0$, we put $W^0_{\varphi}=\varphi$ and 
$E_1(\varphi)=\frac12\int_M\vert\overline{\nabla}\varphi\vert^2\,v_g=\frac12\int_M\vert d\varphi\vert^2\,v_g$.  
For $k=1$, $E_1=\widetilde{E}_1=E$, 
and for $k=2$,   
the {\em bienergy functional} $E_2$ 
is given by 
\begin{equation}
E_2(\varphi)=\frac12\int_M\vert\tau(\varphi)\vert ^2v_g, 
\end{equation}
where 
$\vert V\vert^2=h(V,V)$, $V\in \Gamma(\varphi^{-1}TN)$.  
Then, the first variation formula of the bienergy functional 
is given by
\begin{equation}
\frac{d}{dt}\bigg\vert_{t=0}E_2(\varphi_t)
=-\int_Mh(\tau_2(\varphi),V)v_g.
\end{equation}
Here, 
\begin{equation}
\tau_2(\varphi)
:=J(\tau(\varphi))=\overline{\Delta}(\tau(\varphi))-{\mathcal R}(\tau(\varphi)),
\end{equation}
which is called the {\em bitension field} of $\varphi$, and 
$J$ is given in $(2.4)$.  
A smooth map $\varphi$ of $(M,g)$ into $(N,h)$ is said to be 
{\em biharmonic} if 
$\tau_2(\varphi)=0$. 
\par
\par
For $k=3$, the first variation formula of the {\em trienergy} $E_3$ given by 
\begin{equation}
E_3(\psi)=\frac12\int_M\vert (d+\delta)(d+\delta)(d+\delta)\psi\vert^2\,v_g
\end{equation} 
is given as follows: 
\begin{thm}  The first variational formula of $E_3$ is given by 
\begin{align}
&\frac{d}{dt}\bigg\vert_{t=0}E_3(\varphi_t)=-\int_M\langle \tau_3(\varphi),V\rangle\,v_g,
\\
&\tau_3(\varphi)=J(\overline{\Delta}(\tau(\varphi)))-\sum_{i=1}^m
R^N(\overline{\nabla}_{e_i}\tau(\varphi),\tau(\varphi))d\varphi(e_i).
\end{align}
Here, $\tau_3(\varphi)$ is called the {\em tritension field} of $\varphi$.  
\end{thm}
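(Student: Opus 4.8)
The plan is to reduce $E_3$ to a manageable functional and then run the standard first-variation computation, using only the first variation of the tension field together with the algebraic symmetries of the target curvature tensor $R^N$. First I would make the definition explicit: for a map $\varphi$ one has $(d+\delta)\varphi=d\varphi\in\Gamma(T^{\ast}M\otimes\varphi^{-1}TN)$, and since $\widetilde\nabla d\varphi$ is symmetric the exterior covariant differential of $d\varphi$ vanishes, so $(d+\delta)^2\varphi=-\tau(\varphi)$ and hence $(d+\delta)^3\varphi=-\overline\nabla\tau(\varphi)$; therefore
\[
E_3(\varphi)=\frac12\int_M\vert\overline\nabla\tau(\varphi)\vert^2\,v_g ,
\]
which is the functional I would differentiate (this is the expression already recorded in the statement of Theorem 1.3).

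Next I would take a compactly supported variation $\{\varphi_t\}$ of $\varphi$ with variation field $V$, denote by $V_t$ the vector field $\frac{\partial}{\partial t}\varphi_t$ along $\varphi_t$ (so $V_0=V$), and write $\overline\nabla_t$ for the covariant derivative in the $t$-direction. From the relations $\overline\nabla_t\bigl(d\varphi_t(X)\bigr)=\overline\nabla_X V_t$ and $\overline\nabla_t\overline\nabla_X W-\overline\nabla_X\overline\nabla_t W=R^N(V_t,d\varphi_t(X))W$, the direct computation underlying Jiang's second variation formula \cite{J} gives, for every $t$,
\[
\overline\nabla_t\,\tau(\varphi_t)=-\overline\Delta V_t+\mathcal R(V_t)=-J(V_t),
\]
where $\overline\Delta,\ \mathcal R,\ J$ are formed from $\varphi_t$. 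I would then differentiate $E_3(\varphi_t)$, write $\vert\overline\nabla\tau(\varphi_t)\vert^2=\sum_i\langle\overline\nabla_{e_i}\tau(\varphi_t),\overline\nabla_{e_i}\tau(\varphi_t)\rangle$, commute $\overline\nabla_t$ past $\overline\nabla_{e_i}$ (which produces one curvature term), integrate by parts in the resulting $\vert\overline\nabla(\cdot)\vert^2$-term — legitimate because $\overline\nabla_t\tau(\varphi_t)=-J(V_t)$ has compact support — and substitute the displayed formula, using the self-adjointness of $\overline\Delta$ and of $\mathcal R$, to reach
\[
\frac{d}{dt}E_3(\varphi_t)=-\int_M\langle V_t,\ J(\overline\Delta\tau(\varphi_t))\rangle\,v_g+\int_M\sum_{i=1}^m\langle R^N(V_t,d\varphi_t(e_i))\tau(\varphi_t),\ \overline\nabla_{e_i}\tau(\varphi_t)\rangle\,v_g .
\]

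Finally, in the last integral I would use the pair symmetry $\langle R^N(A,B)C,D\rangle=\langle R^N(C,D)A,B\rangle$ together with the two skew-symmetries of $R^N$ to rewrite the integrand as $\langle\sum_i R^N(\overline\nabla_{e_i}\tau(\varphi_t),\tau(\varphi_t))d\varphi_t(e_i),\ V_t\rangle$; combining the two integrals and setting $t=0$ yields
\[
\frac{d}{dt}\bigg\vert_{t=0}E_3(\varphi_t)=-\int_M\Bigl\langle J(\overline\Delta\tau(\varphi))-\sum_{i=1}^m R^N(\overline\nabla_{e_i}\tau(\varphi),\tau(\varphi))d\varphi(e_i),\ V\Bigr\rangle\,v_g,
\]
which is $(2.14)$--$(2.15)$, and by arbitrariness of $V$ this identifies $\tau_3(\varphi)$. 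None of this is deep: the integrations by parts and the commutation of $\overline\nabla_t$ with $\overline\nabla_{e_i}$ are routine once the variation is taken compactly supported, and the reduction of $E_3$ and the formula for $\overline\nabla_t\tau(\varphi_t)$ are essentially bookkeeping. The one place where care is genuinely needed is the last curvature manipulation: the precise ordering of the arguments of $R^N$ and the overall sign in the term $\sum_i R^N(\overline\nabla_{e_i}\tau(\varphi),\tau(\varphi))d\varphi(e_i)$ of $\tau_3(\varphi)$ are pinned down by exactly which Riemannian symmetries of the curvature tensor one invokes and in what order, and that is where a hasty computation would slip.
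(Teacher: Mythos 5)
Your proposal is correct and follows essentially the same route as the paper: reduce $E_3$ to $\frac12\int_M\vert\overline\nabla\tau(\varphi)\vert^2\,v_g$, use Jiang's formula $\overline\nabla_{\partial/\partial t}\tau(F)\vert_{t=0}=-J(V)$, commute $\overline\nabla_{\partial/\partial t}$ with $\overline\nabla_{e_i}$ to pick up the curvature term (this is exactly the paper's Lemma~2.2), integrate by parts, and finish with the pair symmetry of $R^N$. The curvature bookkeeping you flag as the delicate point indeed comes out with the sign and argument order stated in $(2.15)$.
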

For completeness, we give a proof. The proof is standard.
\par
For $V\in \Gamma(\varphi^{-1}TN)$, let 
$\varphi_t$ $(-\epsilon<t<\epsilon)$ of $\varphi$ be 
a $C^{\infty}$ variation of $V$ with $\varphi_0=\varphi$, 
$V(x)=\frac{d}{dt}\big\vert_{t=0}\varphi_t(x)$ $(x\in M)$. 
Let us define a $C^{\infty}$ map $F:\,(-\epsilon,\epsilon)\times M\rightarrow N$, in such a way that 
\begin{equation}
\left\{
\begin{aligned}
F(0,x)&=\varphi(x),\qquad x\in M,\nonumber\\
F(t,x)&=\varphi_t(x),\qquad -\epsilon<t<\epsilon,\,\,x\in M.
\end{aligned}
\right.
\end{equation}
We need the following lemma. 
\begin{lem} 
For every smooth vector field $X$ in $M$, 
\begin{align}
\overline{\nabla}_{\frac{\partial}{\partial t}}
\overline{\nabla}_X
\tau(F)
\bigg\vert_{t=0}
&=
-\overline{\nabla}_X(\overline{\Delta} V)
+\sum_{j=1}^m\overline{\nabla}_X
\left(
R^N(V,d\varphi(e_j))d\varphi(e_j)
\right)\nonumber\\
&\qquad+R^N(V,d\varphi(X))\tau(\varphi),
\end{align}
where $\{e_j\}_{j=1}^m$ is a locally defined orthonormal frame field on $(M,g)$. 
\end{lem}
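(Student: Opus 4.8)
The plan is to reduce the identity to two standard ingredients: the curvature-commutation rule for the induced connection $\overline{\nabla}$ on $F^{-1}TN$ relating $\overline{\nabla}_{\frac{\partial}{\partial t}}$ and $\overline{\nabla}_X$, and the classical first variation formula of the tension field. First I would record the commutation rule: since $[\frac{\partial}{\partial t},X]=0$ when $X$ is a vector field on $M$, viewed (independently of $t$) as a vector field on $(-\epsilon,\epsilon)\times M$, for every $W\in\Gamma(F^{-1}TN)$ one has
\begin{equation*}
\overline{\nabla}_{\frac{\partial}{\partial t}}\overline{\nabla}_X W-\overline{\nabla}_X\overline{\nabla}_{\frac{\partial}{\partial t}}W=R^N\!\left(dF\!\left(\tfrac{\partial}{\partial t}\right),dF(X)\right)W .
\end{equation*}
Applying this with $W=\tau(F)$ and restricting to $t=0$, where $dF(\frac{\partial}{\partial t})\big|_{t=0}=V$, $dF(X)\big|_{t=0}=d\varphi(X)$, $\tau(F)\big|_{t=0}=\tau(\varphi)$, and using that $\overline{\nabla}_X$ commutes with restriction to the slice $\{0\}\times M$, we get
\begin{equation*}
\overline{\nabla}_{\frac{\partial}{\partial t}}\overline{\nabla}_X\tau(F)\Big|_{t=0}=\overline{\nabla}_X\!\left(\overline{\nabla}_{\frac{\partial}{\partial t}}\tau(F)\Big|_{t=0}\right)+R^N(V,d\varphi(X))\tau(\varphi) .
\end{equation*}

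Next I would compute $\overline{\nabla}_{\frac{\partial}{\partial t}}\tau(F)\big|_{t=0}$, which is the well-known first variation of the tension field. Writing $\tau(F)=\sum_{j=1}^m\{\overline{\nabla}_{e_j}(dF(e_j))-dF(\nabla_{e_j}e_j)\}$ for a local orthonormal frame $\{e_j\}$ on $(M,g)$, extended $t$-independently so that $[\frac{\partial}{\partial t},e_j]=0$, and differentiating in $t$, I would use the commutation rule above together with the symmetry $\overline{\nabla}_{\frac{\partial}{\partial t}}dF(Y)=\overline{\nabla}_Y dF(\frac{\partial}{\partial t})$ (torsion-freeness, applied to $Y=e_j$ and $Y=\nabla_{e_j}e_j$) to obtain, at $t=0$,
\begin{equation*}
\overline{\nabla}_{\frac{\partial}{\partial t}}\tau(F)\Big|_{t=0}=\sum_{j=1}^m\{\overline{\nabla}_{e_j}\overline{\nabla}_{e_j}V-\overline{\nabla}_{\nabla_{e_j}e_j}V\}+\sum_{j=1}^m R^N(V,d\varphi(e_j))d\varphi(e_j)=-\overline{\Delta}V+\sum_{j=1}^m R^N(V,d\varphi(e_j))d\varphi(e_j),
\end{equation*}
the last equality being the definition of the rough Laplacian $\overline{\Delta}$. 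I would either quote this first variation formula or include the few lines just sketched.

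Finally, substituting the last display into the one at the end of the first step, and distributing the $\mathbb{R}$-linear operator $\overline{\nabla}_X$ over the sum, I arrive at
\begin{equation*}
\overline{\nabla}_{\frac{\partial}{\partial t}}\overline{\nabla}_X\tau(F)\Big|_{t=0}=-\overline{\nabla}_X(\overline{\Delta}V)+\sum_{j=1}^m\overline{\nabla}_X\!\left(R^N(V,d\varphi(e_j))d\varphi(e_j)\right)+R^N(V,d\varphi(X))\tau(\varphi),
\end{equation*}
which is precisely the asserted formula. I do not anticipate a real obstacle: the argument is a careful bookkeeping of curvature terms. The two points deserving attention are that one must consistently track the order of the arguments of $R^N$ (hence the signs) through the two invocations of the commutation rule, and that the frame $\{e_j\}$ is in general non-parallel; if preferred, one may take $\{e_j\}$ geodesic at the point under consideration so that the $\nabla_{e_j}e_j$ terms vanish there, which trims the middle step.
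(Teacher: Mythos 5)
Your proposal is correct and follows essentially the same route as the paper: commute $\overline{\nabla}_{\partial/\partial t}$ past $\overline{\nabla}_X$ using the curvature of the pullback connection (valid since $[\frac{\partial}{\partial t},X]=0$), then substitute the first variation formula $\overline{\nabla}_{\partial/\partial t}\tau(F)\vert_{t=0}=-\overline{\Delta}V+\sum_j R^N(V,d\varphi(e_j))d\varphi(e_j)$, which the paper simply quotes from Jiang while you sketch its (standard) derivation. The signs and the order of the curvature arguments in your computation all check out.
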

\begin{proof}
Since $[\frac{\partial}{\partial t},X]=0$, we have 
\begin{align}
\overline{\nabla}_{\frac{\partial}{\partial t}}\big(
\overline{\nabla}_X\tau(F)
\big)
=\overline{\nabla}_X\big(
\overline{\nabla}_{\frac{\partial}{\partial t}}\tau(F)\big)
+R^N\big(dF\big(
\frac{\partial}{\partial t}\big),dF(X)
\big)\,\tau(F).
\end{align}
Due to (23) and (22) in Jiang's paper (\cite{J}, English version, p. 214),  we have 
\begin{align}
\overline{\nabla}_{\frac{\partial}{\partial t}}\tau(F)\bigg\vert_{t=0}
&=\overline{\nabla}_{\frac{\partial}{\partial t}}\big(
\sum_{j=1}^m
(\widetilde{\nabla}_{e_j}dF)(e_j)
\big)\bigg\vert_{t=0}\nonumber\\
&=-\overline{\Delta}V+\sum_{j=1}^mR^N(V,d\varphi(e_j))d\varphi(e_j). 
\end{align}
By substituting (2.17) into (2.16), we have (2.15). 
\end{proof}
\vskip0.6cm\par
\begin{proof}[Proof of Theorem 2.1] 
By definition of $E_3$, we have for every $C^{\infty}$ map 
$\varphi:\,M\rightarrow N$, 
\begin{align}
E_3(\varphi)&=\frac12\int_M\langle d(\delta d\varphi),d(\delta d\varphi)\rangle\,v_g\nonumber\\
&=\frac12\int_M\sum_{i=1}^m\langle 
\overline{\nabla}_{e_i}(\tau(\varphi)), \overline{\nabla}_{e_i}(\tau(\varphi))\rangle\,v_g.
\end{align}
By Lemma 2.2, we have for a $C^{\infty}$ variation $\varphi_t$ of $V$ with $\varphi_0=\varphi$, 
\begin{align}
\frac{d}{dt}\bigg\vert_{t=0}E_3(\varphi_t)&=
\int_M\sum_{i=1}^m\langle
\overline{\nabla}_{\frac{\partial}{\partial t}}(\overline{\nabla}_{e_i}\tau(F)),\overline{\nabla}_{e_i}\tau(F)\rangle\,v_g\bigg\vert_{t=0}\nonumber\\
&=\int_M\sum_{i=1}^m
\bigg\langle
-\overline{\nabla}_{e_i}(\overline{\Delta}V)
+\sum_{j=1}^m\overline{\nabla}_{e_i}\big(
R^N(V,d\varphi(e_j))d\varphi(e_j)\big)\nonumber\\
&\qquad +R^N(V,d\varphi(e_i))\tau(\varphi),\overline{\nabla}_{e_i}(\tau(\varphi))
\bigg\rangle\,v_g.
\end{align}
Here, by using formula 
for every $\omega_j\in \Gamma(\varphi^{-1}TN)$, $(j=1,2)$, 
\begin{align}
\int_M\sum_{i=1}^m\langle 
\overline{\nabla}_{e_i}\omega_1,\overline{\nabla}_{e_i}\omega_2\rangle\,v_g 
=\int_M\langle\overline{\Delta}\omega_1,\omega_2\rangle\,v_g, 
\end{align}
we have 
\begin{align}
\frac{d}{dt}\bigg\vert_{t=0}E_3(\varphi_t)&=\int_M
\langle V,-\overline{\Delta}^2\tau(\varphi)\rangle\,v_g\nonumber\\
&\quad+\int_M\sum_{j=1}^m\langle 
R^N(V,d\varphi(e_j))d\varphi(e_j),\overline{\Delta}\tau(\varphi)\rangle\,v_g
\nonumber\\
&\quad
+\int_M\sum_{j=1}^m\langle R^N(V,d\varphi(e_j))\tau(\varphi),\overline{\nabla}_{e_j}\tau(\varphi)\rangle\,v_g\nonumber\\
&=
\int_M\bigg\langle V, 
-\overline{\Delta}^2\tau(\varphi)
+\sum_{j=1}^mR^N(\overline{\Delta}\tau(\varphi),d\varphi(e_j))d\varphi(e_j)\nonumber\\
&\qquad\qquad
+\sum_{j=1}^mR^N(\overline{\nabla}_{e_j}\tau(\varphi),\tau(\varphi))d\varphi(e_j)
\bigg\rangle\,v_g,\nonumber
\end{align}
in which we used 
the property $\langle R^N(v_3,v_4)v_2,v_1\rangle=\langle R^N(v_1,v_2)v_4,v_3\rangle$. 
We have Theorem 2.1. 
\end{proof}
\vskip0.6cm\par
\begin{cor} Assume that $(N,h)$ is an $n$-dimensional 
Riemannian manifold $(N(c), h)$ of constant curvature $c$,  and 
let $\varphi:\,(M,g)\rightarrow N(c)$ be an isometric immersion. Then, we have 
\begin{align}
\tau_3(\varphi)&=
\overline{\Delta}^2\tau(\varphi)-\sum_{j=1}^mR^N(\overline{\Delta}\tau(\varphi),d\varphi(e_j))d\varphi(e_j)\nonumber\\
&\qquad\qquad\,\, -c\,h(\tau(\varphi),\tau(\varphi))\,\tau(\varphi).
\end{align} 
\end{cor}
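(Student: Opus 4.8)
The plan is to derive this directly from the formula for the tritension field in Theorem 2.1 by feeding in two pieces of structure: the explicit form of the curvature tensor of a space of constant curvature, and the fact that $\tau(\varphi)$ is a normal vector field along an isometric immersion. No hard analysis is needed; it is a purely pointwise algebraic computation.

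First I would unwind the Jacobi operator in $\tau_3(\varphi)=J(\overline{\Delta}\tau(\varphi))-\sum_{i=1}^m R^N(\overline{\nabla}_{e_i}\tau(\varphi),\tau(\varphi))d\varphi(e_i)$ by its definition $(2.4)$–$(2.5)$ with $V=\overline{\Delta}\tau(\varphi)$, which gives
\[
\tau_3(\varphi)=\overline{\Delta}^2\tau(\varphi)-\sum_{i=1}^m R^N(\overline{\Delta}\tau(\varphi),d\varphi(e_i))d\varphi(e_i)-\sum_{i=1}^m R^N(\overline{\nabla}_{e_i}\tau(\varphi),\tau(\varphi))d\varphi(e_i),
\]
so that the whole matter reduces to showing the last sum equals $c\,h(\tau(\varphi),\tau(\varphi))\,\tau(\varphi)$. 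For $N=N(c)$ one has $R^N(X,Y)Z=c\big(h(Y,Z)X-h(X,Z)Y\big)$, hence
\[
R^N(\overline{\nabla}_{e_i}\tau(\varphi),\tau(\varphi))d\varphi(e_i)=c\,h(\tau(\varphi),d\varphi(e_i))\,\overline{\nabla}_{e_i}\tau(\varphi)-c\,h(\overline{\nabla}_{e_i}\tau(\varphi),d\varphi(e_i))\,\tau(\varphi).
\]
Since $\varphi$ is isometric, $\tau(\varphi)=\sum_i B(\varphi)(e_i,e_i)$ is normal, so $h(\tau(\varphi),d\varphi(e_i))=0$ and the first term drops.

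For the surviving term I would fix a point, choose a local orthonormal frame with $\nabla_{e_i}e_i=0$ there, differentiate the normality relation $h(\tau(\varphi),d\varphi(e_i))=0$ along $e_i$, and use $(2.2)$ to get $h(\overline{\nabla}_{e_i}\tau(\varphi),d\varphi(e_i))=-h(\tau(\varphi),B(\varphi)(e_i,e_i))$; summing over $i$ yields $\sum_i h(\overline{\nabla}_{e_i}\tau(\varphi),d\varphi(e_i))=-h(\tau(\varphi),\tau(\varphi))$. Substituting back gives $\sum_i R^N(\overline{\nabla}_{e_i}\tau(\varphi),\tau(\varphi))d\varphi(e_i)=c\,h(\tau(\varphi),\tau(\varphi))\,\tau(\varphi)$, and the claimed identity for $\tau_3(\varphi)$ follows. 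The only thing to watch is sign and convention bookkeeping — that the constant-curvature form $R^N(X,Y)Z=c(h(Y,Z)X-h(X,Z)Y)$ matches the curvature convention fixed just after $(2.5)$, and that the frame may legitimately be taken geodesic at the point so the $d\varphi(\nabla_{e_i}e_i)$ contributions genuinely vanish; beyond that there is no real obstacle.
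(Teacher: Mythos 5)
Your proposal is correct and follows essentially the same route as the paper: expand $J(\overline{\Delta}\tau(\varphi))$, insert the constant-curvature form of $R^N$, use normality of $\tau(\varphi)$ to kill the term $c\,h(\tau(\varphi),d\varphi(e_j))\overline{\nabla}_{e_j}\tau(\varphi)$, and differentiate $h(\tau(\varphi),d\varphi(e_j))=0$ to get $\sum_j h(\overline{\nabla}_{e_j}\tau(\varphi),d\varphi(e_j))=-h(\tau(\varphi),\tau(\varphi))$. The only cosmetic difference is that you pass to a geodesic frame at a point, whereas the paper keeps the $d\varphi(\nabla_{e_j}e_j)$ term and discards it by normality of $\tau(\varphi)$; the signs and conventions all check out.
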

\begin{proof}
Since $R^N(X,Y)Z=c\,\{h(Y,Z)X-h(X,Z)\,Y\}$, $(X,Y,Z\in {\frak X}(N))$, we have 
\begin{align}
\sum_{j=1}^mR^N(\overline{\nabla}_{e_j}\tau(\varphi),\tau(\varphi))d\varphi(e_j)
&=\sum_{j=1}^mc\,\{
h(\tau(\varphi),d\varphi(e_j))\,\overline{\nabla}_{e_j}\tau(\varphi)\nonumber\\
&\qquad 
-h(\overline{\nabla}_{e_j}\tau(\varphi),d\varphi(e_j))\,\tau(\varphi)
\}\nonumber\\
&=-c\sum_{j=1}^mh(\overline{\nabla}_{e_j}\tau(\varphi),d\varphi(e_j))\,\tau(\varphi).
\end{align}
Because the tension field $\tau(\varphi)$ is orthogonal to the subspace 
$d\varphi(T_xM)$ $(x\in M)$
since $\varphi:\,(M,g)\rightarrow N(c)$ is an isometric immersion. 
Then, 
\begin{align}
\sum_{j=1}^mh(\overline{\nabla}_{e_j}\tau(\varphi),d\varphi(e_j))
&=\sum_{j=1}^m \{
e_j(h(\tau(\varphi),d\varphi(e_j)))-h(\tau(\varphi),\overline{\nabla}_{e_j}(d\varphi(e_j)))\}\nonumber\\
&=-h(\tau(\varphi),\sum_{j=1}^m\overline{\nabla}e_j(d\varphi(e_j)))\nonumber\\
&=-h(\tau(\varphi),\tau(\varphi)+\sum_{j=1}^md\varphi(\nabla_{e_j}e_j))\nonumber\\
&=-h(\tau(\varphi),\tau(\varphi))
\end{align} 
since $\tau(\varphi)=\sum_{j=1}^m\{\overline{\nabla}_{e_j}(d\varphi(e_j))-d\varphi(\nabla_{e_j}e_j)\}$. By substituting (2.23) into (2.22), 
(2.22) is equal to 
$c\,h(\tau(\varphi),\tau(\varphi))\,\tau(\varphi)$. 
Then, the right hand side of (2.14) is equal to 
$J(\overline{\Delta}(\tau(\varphi)))-c\,h(\tau(\varphi),\tau(\varphi))\,\tau(\varphi)$. 
We obtain Corollary 2.3. 
\end{proof}
\vskip0.6cm\par
Then, we can state our main theorem. 
\begin{thm}\label{main Th}
Let $\varphi:\,(M,g)\rightarrow N(c)$ be an isometric immersion of a complete Riemannian manifold $(M,g)$ into 
a Riemannian manifold $N(c)$ of non-positively constant curvature $c$. 
\par
$(1)$ In the case of $c<0$,  
if $\varphi$ is triharmonic and both the extended 
$4$-energy $\widetilde{E}_4(\varphi)=\frac12\int_M\vert \overline{\Delta}\tau(\varphi)\vert^2\,v_g$ and the $L^4$-norm $\int_M\vert \tau(\varphi)\vert^4\,v_g$ are finite, 
then $\varphi$ 
is harmonic, i.e., minimal. 
\par
$(2)$ In the case of $c=0$, 
and $\mbox{\rm Vol}(M,g)=\infty$, 
if $\varphi$ is triharmonic, and $\widetilde{E}_4(\varphi)=\frac12\int_M\vert \overline{\Delta}\tau(\varphi)\vert^2\,v_g<\infty$, $E_2(\varphi)=\frac12\int_M\vert\tau(\varphi)\vert^2\,v_g<\infty$ and 
$\int_M\vert \tau(\varphi)\vert^4\,v_g<\infty$, 
then $\varphi$ 
is harmonic, i.e., minimal. 
\par
$(3)$ 
In the case of $c=0$ and $\mbox{\rm Vol}(M,g)<\infty$, 
if $\varphi$ is triharmonic, and $\widetilde{E}_4(\varphi)=\frac12\int_M\vert \overline{\Delta}\tau(\varphi)\vert^2\,v_g<\infty$, 
$E_3(\varphi)=\frac12\int_M\vert\overline{\nabla}\tau(\varphi)\vert^2\,v_g<\infty$ and 
$\int_M\vert \tau(\varphi)\vert^4\,v_g<\infty$, 
then $\varphi$ 
is harmonic, i.e., minimal. 
\end{thm}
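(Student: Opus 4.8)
The plan is to exploit the triharmonicity equation $\tau_3(\varphi)=0$, which by Corollary 2.3 reads
\begin{equation*}
\olapla^2\tau(\varphi)-\sum_{j=1}^mR^N(\olapla\tau(\varphi),d\varphi(e_j))d\varphi(e_j)-c\,\vert\tau(\varphi)\vert^2\,\tau(\varphi)=0,
\end{equation*}
and to turn it into an integral inequality by pairing with a suitable test function. First I would compute $\langle\tau_3(\varphi),\tau(\varphi)\rangle$: since the curvature term $\sum_j R^N(\olapla\tau(\varphi),d\varphi(e_j))d\varphi(e_j)$ equals $-c\sum_j\{\langle\olapla\tau(\varphi),d\varphi(e_j)\rangle d\varphi(e_j)-\vert d\varphi(e_j)\vert^2\olapla\tau(\varphi)\}$, and $\tau(\varphi)$ is normal while $d\varphi(e_j)$ is tangent, pairing with $\tau(\varphi)$ kills the $\langle\olapla\tau(\varphi),d\varphi(e_j)\rangle\langle d\varphi(e_j),\tau(\varphi)\rangle$ piece and leaves $mc\,\langle\olapla\tau(\varphi),\tau(\varphi)\rangle$ from the trace term (using $\vert d\varphi(e_j)\vert^2=1$). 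Hence triharmonicity gives the pointwise identity
\begin{equation*}
\langle\olapla^2\tau(\varphi),\tau(\varphi)\rangle = mc\,\langle\olapla\tau(\varphi),\tau(\varphi)\rangle + c\,\vert\tau(\varphi)\vert^4.
\end{equation*}

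Next I would introduce Gaffney/cutoff arguments in the spirit of the biharmonic case (\cite{NU3}): pick a family of cutoff functions $\eta=\eta_R$ with $\eta_R\equiv1$ on $B_R$, supported in $B_{2R}$, $\vert\nabla\eta_R\vert\le C/R$, and integrate the identity above against $\eta_R^2$. On the left, $\int_M\eta_R^2\langle\olapla^2\tau(\varphi),\tau(\varphi)\rangle\,v_g$ should be integrated by parts twice to produce $\int_M\eta_R^2\vert\olapla\tau(\varphi)\vert^2\,v_g$ plus error terms involving $\nabla\eta_R$ paired with $\olapla\tau(\varphi)$, $\onabla\tau(\varphi)$ and $\tau(\varphi)$; these errors are controlled by Cauchy--Schwarz and Young's inequality using the finiteness of $\widetilde E_4(\varphi)=\frac12\int_M\vert\olapla\tau(\varphi)\vert^2\,v_g$ (and, to absorb the cross terms, a Kato-type bound $\vert\onabla\tau(\varphi)\vert\le C$ in $L^2$ locally, obtained by integrating by parts once more, or the extra hypothesis $E_3<\infty$ in case $(3)$). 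On the right-hand side, the term $mc\int_M\eta_R^2\langle\olapla\tau(\varphi),\tau(\varphi)\rangle\,v_g$, after one integration by parts, becomes $-mc\int_M\eta_R^2\vert\onabla\tau(\varphi)\vert^2\,v_g$ plus an $\nabla\eta_R$ error; since $c\le0$ this term has a favorable sign. The remaining term $c\int_M\eta_R^2\vert\tau(\varphi)\vert^4\,v_g\le0$ is also of the good sign. Letting $R\to\infty$, the $\nabla\eta_R$ errors vanish (here the $L^4$-finiteness of $\tau(\varphi)$, $\widetilde E_4(\varphi)<\infty$, and in the borderline cases $E_2(\varphi)<\infty$ or $E_3(\varphi)<\infty$, are exactly what make the error integrals tend to $0$), and one obtains
\begin{equation*}
\int_M\vert\olapla\tau(\varphi)\vert^2\,v_g \;\le\; mc\int_M\vert\onabla\tau(\varphi)\vert^2\,v_g \;+\; c\int_M\vert\tau(\varphi)\vert^4\,v_g \;\le\;0,
\end{equation*}
so $\olapla\tau(\varphi)=0$ on all of $M$.

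Once $\olapla\tau(\varphi)=0$, I would feed this back into the triharmonic equation: it collapses to $-c\,\vert\tau(\varphi)\vert^2\,\tau(\varphi)=0$, and since $c<0$ (case $(1)$) this forces $\tau(\varphi)=0$ directly, i.e.\ $\varphi$ is minimal. In the cases $c=0$, the equation $\olapla\tau(\varphi)=0$ alone is not enough, so I would instead run a second Bochner-type integration: from $\olapla\tau(\varphi)=0$ and $\int_M\eta_R^2\langle\olapla\tau(\varphi),\tau(\varphi)\rangle\,v_g=0$ one gets $\int_M\eta_R^2\vert\onabla\tau(\varphi)\vert^2\,v_g\to0$, hence $\onabla\tau(\varphi)=0$, so $\vert\tau(\varphi)\vert$ is constant; then the extra hypothesis ($E_2(\varphi)<\infty$ with $\mathrm{Vol}(M)=\infty$, or $E_3(\varphi)<\infty$ with $\mathrm{Vol}(M)<\infty$ forcing $\onabla\tau(\varphi)=0$ and then $\tau(\varphi)\equiv\text{const}$ together with finite $L^2$-norm of $\tau(\varphi)$) forces that constant to be zero. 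The main obstacle I anticipate is the careful bookkeeping of the $\nabla\eta_R$ error terms in the double integration by parts of $\int\eta_R^2\langle\olapla^2\tau(\varphi),\tau(\varphi)\rangle$: there one must be sure that every error can be dominated by the given finite energies after applying Young's inequality with small parameter $\varepsilon$ to absorb $\varepsilon\int\eta_R^2\vert\olapla\tau(\varphi)\vert^2$ into the left-hand side, and that the leftover $\varepsilon$-independent pieces genuinely decay like $R^{-2}\times(\text{finite energy})$. This is where completeness of $(M,g)$ (to have the cutoffs and no boundary terms) and the precise combination of hypotheses in each of the three cases are used.
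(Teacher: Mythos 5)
There is a genuine gap, and it lies exactly where you flagged your main worry: the error terms produced by pairing the equation with $\tau(\varphi)\,\eta_R^2$ cannot be controlled by the hypotheses of case $(1)$. Writing $\int_M\eta_R^2\langle\overline{\Delta}^2\tau(\varphi),\tau(\varphi)\rangle\,v_g=\int_M\langle\overline{\Delta}\tau(\varphi),\overline{\Delta}(\eta_R^2\,\tau(\varphi))\rangle\,v_g$, the Leibniz rule for the rough Laplacian gives, besides the good term $\int\eta_R^2\vert\overline{\Delta}\tau(\varphi)\vert^2$, the cross terms
$-2\int\sum_i e_i(\eta_R^2)\,\langle\overline{\Delta}\tau(\varphi),\overline{\nabla}_{e_i}\tau(\varphi)\rangle$ and $\int(\Delta\eta_R^2)\,\langle\overline{\Delta}\tau(\varphi),\tau(\varphi)\rangle$. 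The first requires $\int\vert\overline{\nabla}\tau(\varphi)\vert^2$ (i.e.\ $E_3$) to be finite on the annuli, and when $c=0$ there is no good $\vert\overline{\nabla}\tau\vert^2$ term on the left to absorb an $\epsilon$-piece of it; the second requires either a bound on $\Delta\eta_R$ (which the standard cutoff with $\vert\nabla\eta_R\vert\le 2/r$ does not provide on a general complete manifold without curvature assumptions) together with $\int\vert\tau(\varphi)\vert^2<\infty$, or yet another integration by parts that reintroduces the uncontrolled third-order quantity $\overline{\nabla}\,\overline{\Delta}\tau(\varphi)$. In case $(1)$ neither $E_2$ nor $E_3$ is assumed finite, so none of these escapes is available, and the inequality $\int\vert\overline{\Delta}\tau\vert^2\le mc\int\vert\overline{\nabla}\tau\vert^2+c\int\vert\tau\vert^4$ is not actually established. (Your pointwise identity $\langle\overline{\Delta}^2\tau,\tau\rangle=mc\,\langle\overline{\Delta}\tau,\tau\rangle+c\,\vert\tau\vert^4$ and the endgame "$\overline{\Delta}\tau=0$ plus the equation forces $c\vert\tau\vert^2\tau=0$" are both fine; the problem is purely in justifying the integrated inequality.)

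The paper sidesteps this by choosing the test section to be $\overline{\Delta}\tau(\varphi)\,\eta^2$ rather than $\tau(\varphi)\,\eta^2$. Then only one integration by parts is needed on the leading term, the good quantity becomes $\int\eta^2\vert\overline{\nabla}\,\overline{\Delta}\tau(\varphi)\vert^2$, and every cutoff error is of the form $r^{-2}\times\bigl(\int\vert\overline{\Delta}\tau(\varphi)\vert^2$ or $\int\vert\tau(\varphi)\vert^4\bigr)$ — precisely the two finite quantities assumed. The price is that one concludes only $\overline{\nabla}\,\overline{\Delta}\tau(\varphi)=0$, $\nabla\vert\tau(\varphi)\vert^2=0$ and $\vert\tau(\varphi)\vert^2\vert\overline{\nabla}\tau(\varphi)\vert^2=0$, and must then argue (constancy of $\vert\tau(\varphi)\vert$, hence $\overline{\Delta}\tau(\varphi)=0$, hence $\tau(\varphi)=0$ from the equation when $c<0$; and reduction to the biharmonic theorems of \cite{NU3}, \cite{NU4} when $c=0$, which is where $E_2$ resp.\ $E_3$ enter). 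If you want to salvage your route, you would have to either add $E_2(\varphi)<\infty$ and $E_3(\varphi)<\infty$ as hypotheses in case $(1)$ (weakening the theorem), or switch to the test section $\overline{\Delta}\tau(\varphi)\,\eta^2$ as the paper does.
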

\vskip0.6cm\par
\section{Proof of Theorem 2.4}
In this section, we will give a proof of 
Theorem 2.4 which consists of 
eight steps. 

\begin{proof}[Proof of Theorem 2.4]
\par
({\it The first step}) \quad 
For a fixed point $x_0\in M$, and 
for every 
$0<r<\infty$, 
we first take a cut-off  $C^{\infty}$ function $\eta$ on $M$ 
(for instance, see \cite{K}) satisfying that 
\begin{equation}
\left\{
\begin{aligned}
0\leq &\eta(x)\leq 1\quad (x\in M),\\
\eta(x)&=1\qquad\quad (x\in B_r(x_0)),\\
\eta(x)&=0\qquad\quad (x\not\in B_{2r}(x_0)),\\
\vert\nabla\eta\vert&\leq\frac{2}{r}
\qquad\,\,\, (x\in M).
\end{aligned}
\right.
\end{equation}
\par
\vskip0.6cm\par
For a triharmonic map
$\varphi:\,(M,g)\rightarrow N(c)$, 
the tritension field is given as 
\begin{align}
\tau_3(\varphi)&=
\overline{\Delta}^2(\tau(\varphi))
-\sum_{i=1}^m
R^N(\overline{\Delta}\tau(\varphi),d\varphi(e_i))d\varphi(e_i)\nonumber\\
&\qquad\qquad\quad\,\, 
-c\,h(\tau(\varphi),\tau(\varphi))\,\tau(\varphi)\nonumber\\
&=0,
\end{align}
where $\overline{\Delta}=-\sum_{i=1}^m\{\overline{\nabla}_{e_i}\overline{\nabla}_{e_i}-\overline{\nabla}_{\nabla_{e_i}e_i}\}$. 
By taking the inner product of (3.2) and $\overline{\Delta}\tau(\varphi)\,\eta^2$ 
and integrate over $M$, so we have 
\begin{align}
&\int_M\langle\overline{\Delta}^2(\tau(\varphi)),\,\overline{\Delta}\tau(\varphi)\,\eta^2\rangle\,v_g\nonumber\\
&\quad-\int_M
\sum_{i=1}^m
\langle R^N(\overline{\Delta}\tau(\varphi),d\varphi(e_i))d\varphi(e_i),\overline{\Delta}\tau(\varphi)\rangle\,\eta^2\,v_g\nonumber\\
&\quad-c\,\int_M\langle\tau(\varphi),\tau(\varphi)\rangle\,\langle \tau(\varphi),\overline{\Delta}\tau(\varphi)\rangle\,\eta^2\,v_g\nonumber\\
&\quad= 0.
\end{align}
\vskip0.6cm\par
({\it The second step})\quad For the first term of the left hand side of (3.3),  
\begin{align}
\int_M\langle\overline{\Delta}^2(\tau(\varphi)),\,\overline{\Delta}\tau(\varphi)\,\eta^2\rangle\,v_g
&=\int_M\sum_{i=1}^m\langle 
\overline{\nabla}_{e_i}(\overline{\Delta}\tau(\varphi)),\overline{\nabla}_{e_i}(\overline{\Delta}\tau(\varphi)\,\eta^2)\rangle\,v_g.
\end{align}
Here by using that $e_i(\eta^2)=2\eta\,e_i(\eta)$ and 
$$
\overline{\nabla}_{e_i}(\overline{\Delta}\tau(\varphi)\,\eta^2)=
\overline{\nabla}_{e_i}(\overline{\Delta}\tau(\varphi))\,\eta^2+2\eta\,\nabla_{e_i}\eta\,\overline{\Delta}\tau(\varphi),
$$
(3.4) coincides with 
\begin{align}
\int_M
&\vert\overline{\nabla}\,\overline{\Delta}\tau(\varphi)\vert^2\,\eta^2\,v_g\nonumber\\
&+2\int_M\sum_{i=1}^m\langle 
\eta\,\overline{\nabla}_{e_i}(\overline{\Delta}\tau(\varphi)),\nabla_{e_i}\eta\,\overline{\Delta}\tau(\varphi)\rangle\,v_g,
\end{align}
where we used 
$$
\vert\overline{\nabla}\,\overline{\Delta}\tau(\varphi)\vert^2=\sum_{i=1}^m 
\langle\overline{\nabla}_{e_i}(\overline{\Delta}\tau(\varphi)),\overline{\nabla}_{e_i}(\overline{\Delta}\tau(\varphi))\rangle.
$$
For the second term of (3.5), 
put 
$S_i:=\eta\,\overline{\nabla}_{e_i}(\overline{\Delta}\tau(\varphi))$, and 
$T_i:=\nabla_{e_i}\eta\,\overline{\Delta}\tau(\varphi)$ ($i=1\,\cdots,m$), and 
recall the Young's inequality: 
for every $\epsilon>0$, 
$$
\pm2\langle S_i,T_i\rangle\leq \epsilon\,\vert S_i\vert^2+\frac{1}{\epsilon}\,\vert T_i\vert^2,
$$    
because of the inequality 
 $
 0\leq \vert \sqrt{\epsilon}\,S_i\pm\frac{1}{\sqrt{\epsilon}}\,T_i\vert^2.  
 $
 Therefore, (3.5) is bigger than or equal to
 \begin{align}
 \int_M&\vert\overline{\nabla}\,\overline{\Delta}\tau(\varphi)\vert^2\,\eta^2\,v_g\nonumber\\
 &\qquad\qquad-\bigg\{
 \epsilon\,\int_M\eta^2\,\vert \overline{\nabla}\,\overline{\Delta}\tau(\varphi)\vert^2\,v_g+\frac{1}{\epsilon}\,\int_M 
 \vert\nabla\eta\vert^2\,\vert\overline{\Delta}\tau(\varphi)\vert^2\,v_g
 \bigg\}\nonumber\\
 &=(1-\epsilon)
 \int_M\vert\overline{\nabla}\,\overline{\Delta}\tau(\varphi)\vert^2\,\eta^2\,v_g
 -\frac{1}{\epsilon}\int_M 
 \vert\overline{\Delta}\tau(\varphi)\vert^2\,\vert\nabla\eta\vert^2\,v_g.
 \end{align}
\vskip0.6cm\par
({\it The third step})\quad For the second term of the left hand side of (3.3), 
\begin{align}
&-\int_M\sum_{i=1}^m\langle R^N(\overline{\Delta}\tau(\varphi),d\varphi(e_i))d\varphi(e_i),\overline{\Delta}\tau(\varphi)\rangle\,\eta^2\,v_g \nonumber\\
&=\int_M\eta^2\,\sum_{i=1}^m
\langle R^N(d\varphi(e_i),\overline{\Delta}\tau(\varphi))d\varphi(e_i),\overline{\Delta}\tau(\varphi)\rangle\,v_g\nonumber\\
&=c\int_M\eta^2\,\sum_{i=1}^m\bigg\{
\langle \overline{\Delta}\tau(\varphi),d\varphi(e_i)\rangle^2-
\langle d\varphi(e_i),d\varphi(e_i)\rangle\,\langle\overline{\Delta}\tau(\varphi),\overline{\Delta}\tau(\varphi)\rangle
\bigg\}\,v_g\nonumber\\
&=c\int_M\eta^2\,\bigg\{
\langle \overline{\Delta}\tau(\varphi),d\varphi\rangle^2-\vert\overline{\Delta}\tau(\varphi)\vert^2\,\vert d\varphi\vert^2\bigg\}\,v_g\nonumber\\
&\geq 0
\end{align}
since $c\leq 0$. 
\vskip0.6cm\par
({\it The fourth step})\quad For the third term of the left hand side of (3.3), 
since 
$$\langle \tau(\varphi),\overline{\Delta}\tau(\varphi)\rangle=\frac12\Delta\,\vert\tau(\varphi)\vert^2+\vert\overline{\nabla}\tau(\varphi)\vert^2,$$ 
and 
$$\vert\tau(\varphi)\vert^2\,\Delta\,\vert\tau(\varphi)\vert^2=\frac12\,\Delta\,\vert\tau(\varphi)\vert^4+\vert\,\nabla\,\vert\tau(\varphi)\vert^2\,\vert^2,$$ 
we have 
\begin{align}
-c\int_M&\langle\tau(\varphi),\tau(\varphi)\rangle\,\langle\tau(\varphi),\overline{\Delta}\tau(\varphi)\rangle\,\eta^2\,v_g\nonumber\\
&=
-\frac{c}{2}\,\int_M\vert\tau(\varphi)\vert^2\,\Delta\,\vert\tau(\varphi)\vert^2\,\eta^2\,v_g\nonumber\\
&\quad -c\,\int_M\vert\tau(\varphi)\vert^2\,\vert\overline{\nabla}\tau(\varphi)\vert^2\,\eta^2\,v_g\nonumber\\
&=-\frac{c}{4}\,\int_M\Delta\,\vert\tau(\varphi)\vert^4\,\eta^2\,v_g-\frac{c}{2}\,\int_M\vert\,\nabla\,\vert\tau(\varphi)\vert^2\,\vert^2\,\eta^2\,v_g\nonumber\\
&\quad -c\,\int_M\vert\tau(\varphi)\vert^2\,\vert\overline{\nabla}\tau(\varphi)\vert^2\,\eta^2\,v_g\nonumber\\
&=-\frac{c}{4}\,\int_M\sum_{i=1}^m
\langle\nabla_{e_i}\,\vert\tau(\varphi)\vert^4,\nabla_{e_i}\eta^2\rangle\,\,v_g
-\frac{c}{2}\,\int_M\vert\,\nabla\,\vert\tau(\varphi)\vert^2\,\vert^2\,\eta^2\,v_g
\nonumber\\
&\quad -c\,\int_M\vert\tau(\varphi)\vert^2\,\vert\overline{\nabla}\tau(\varphi)\vert^2\,\eta^2\,v_g\nonumber\\
&=-\frac{c}{2}\,\int_M\eta\,\sum_{i=1}^m
\langle\nabla_{e_i}\,\vert\tau(\varphi)\vert^4,\nabla_{e_i}\eta\rangle\,\,v_g
-\frac{c}{2}\,\int_M\vert\,\nabla\,\vert\tau(\varphi)\vert^2\,\vert^2\,\eta^2\,v_g
\nonumber\\
&\quad -c\,\int_M\vert\tau(\varphi)\vert^2\,\vert\overline{\nabla}\tau(\varphi)\vert^2\,\eta^2\,v_g.
\end{align}
\vskip0.6cm\par
({\it The fifth step})\quad Here, we have 
\begin{align}
\sum_{i=1}^m\vert\,\langle\nabla_{e_i}\,\vert\tau(\varphi)\vert^4,\nabla_{e_i}\eta\rangle\,\vert
&\leq \sum_{i=1}^m\vert\,\nabla_{e_i}\vert\tau(\varphi)\vert^4\,\vert\,
\vert\nabla\eta\vert\nonumber\\
&=\vert\nabla\,\vert\tau(\varphi)\vert^4\vert\,\vert\nabla\eta\vert,  
\end{align}
so that we have, since $c\leq 0$, 
\begin{align}
\mbox{(3.8)}&\geq \frac{c}{2}\,\int_M\vert\,\nabla\,\vert\tau(\varphi)\vert^4\,\vert\,\,\eta\,\,\vert\nabla\eta\vert\,v_g\nonumber\\
&\qquad-\frac{c}{2}\,\int_M\vert\,\nabla\,\vert\tau(\varphi)\vert^2\,\vert^2\,\eta^2\,v_g\nonumber\\
&\qquad-c\,\int_M\vert\tau(\varphi)\vert^2\,\vert\overline{\nabla}\,\tau(\varphi)\vert^2\,\eta^2\,v_g. 
\end{align}
Since we have 
$$
\vert\,\nabla\,\vert\tau(\varphi)\vert^4\,\vert=2\,\vert\tau(\varphi)\vert^2\,\cdot\,\vert\,\nabla\,\vert\tau(\varphi)\vert^2\,\vert,
$$
the right hand side of (3.10) is equal to 
\begin{align}
&c\,\int_M\vert\,\nabla\,\vert\tau(\varphi)\vert^2\,\vert\,\eta\,\cdot\,\vert\tau(\varphi)\vert^2\,\vert\nabla\eta\vert\,v_g\nonumber\\
&\quad 
-\frac{c}{2}\,\int_M\vert\,\nabla\,\vert\tau(\varphi)\vert^2\,\vert^2\,\eta^2\,v_g
\nonumber\\
&\quad-c\,\int_M\vert\tau(\varphi)\vert^2\,\vert\overline{\nabla}\,\tau(\varphi)\vert^2\,\eta^2\,v_g. 
\end{align}
Now applying again for 
$A:=\vert\,\nabla\,\vert\tau(\varphi)\vert^2\,\vert\,\eta$, and 
$B:=\vert\tau(\varphi)\vert^2\,\vert\nabla\eta\vert$, 
the Young's inequality: for every positive number $\delta>0$, 
$$
\pm2\,\langle A,B\rangle\leq \delta\,\vert A\vert^2+\frac{1}{\delta}\,\vert B\vert^2,
$$ 
we obtain because of $c\leq 0$, 
\begin{align}
\mbox{(3.11)}&\geq 
\frac{c\,\delta}{2}\,\int_M\vert\,\nabla\,\vert\tau(\varphi)\vert^2\,\vert^2\,\eta^2\,v_g
+\frac{c}{2\delta}\,\int_M\vert\tau(\varphi)\vert^4\,\,\vert\nabla\eta\vert^2\,v_g\nonumber\\
&\qquad-\frac{c}{2}\,\int_M\,\vert\nabla\,\vert\tau(\varphi)\vert^2\,\vert^2\,\,\eta^2\,v_g\nonumber\\
&\qquad-c\,\int_M\vert\tau(\varphi)\vert^2\,\vert\overline{\nabla}\tau(\varphi)\,\vert^2\,\eta^2\,v_g\nonumber\\
&=-\frac{c}{2}(1-\delta)\,\int_M\vert\,\nabla\,\vert\tau(\varphi)\vert^2\,\vert^2\,\eta^2\,v_g\nonumber\\
&\qquad+\frac{c}{2\delta}\,\int_M\vert\tau(\varphi)\vert^4\,\,\vert\nabla\eta\vert^2\,v_g\nonumber\\
&\qquad-c\int_M\vert\tau(\varphi)\vert^2\,\,\vert\overline{\nabla}\,\tau(\varphi)\vert^2\,\eta^2\,v_g. 
\end{align}
By putting $\delta=\frac12$, we obtain 
\begin{align}
\mbox{(3.12)}&=-\frac{c}{4}\,\int_M
\vert\,\nabla\,\vert\tau(\varphi)\vert^2\,\vert^2\,\eta^2\,v_g\nonumber\\
&\qquad
+c\,\int_M\vert\tau(\varphi)\vert^4\,\,\vert\nabla\eta\vert^2\,v_g\nonumber\\
&\qquad
-c\int_M\vert\tau(\varphi)\vert^2\,\,\vert\overline{\nabla}\,\tau(\varphi)\vert^2\,\eta^2\,v_g. 
\end{align}
\vskip0.6cm\par
({\it The sixth step}) \quad 
All together the above, we obtain 
\begin{align}
0&\geq 
(1-\epsilon)
 \int_M\vert\overline{\nabla}\,\overline{\Delta}\tau(\varphi)\vert^2\,\eta^2\,v_g
 -\frac{1}{\epsilon}\int_M 
 \vert\overline{\Delta}\tau(\varphi)\vert^2\,\vert\nabla\eta\vert^2\,v_g\nonumber\\
 &\qquad
 -\frac{c}{4}\,\int_M
\vert\,\nabla\,\vert\tau(\varphi)\vert^2\,\vert^2\,\eta^2\,v_g
+c\,\int_M\vert\tau(\varphi)\vert^4\,\,\vert\nabla\eta\vert^2\,v_g\nonumber\\
&\qquad
-c\int_M\vert\tau(\varphi)\vert^2\,\,\vert\overline{\nabla}\,\tau(\varphi)\vert^2\,\eta^2\,v_g
\end{align}
which is equivalent to that 
\begin{align}
&\frac{1}{\epsilon}\int_M 
 \vert\overline{\Delta}\tau(\varphi)\vert^2\,\vert\nabla\eta\vert^2\,v_g
 -c\,\int_M\vert\tau(\varphi)\vert^4\,\,\vert\nabla\eta\vert^2\,v_g\nonumber\\
 &\geq 
 (1-\epsilon)
 \int_M\vert\overline{\nabla}\,\overline{\Delta}\tau(\varphi)\vert^2\,\eta^2\,v_g
 -\frac{c}{4}\,\int_M
\vert\,\nabla\,\vert\tau(\varphi)\vert^2\,\vert^2\,\eta^2\,v_g\nonumber\\
&\qquad
-c\int_M\vert\tau(\varphi)\vert^2\,\,\vert\overline{\nabla}\,\tau(\varphi)\vert^2\,\eta^2\,v_g.
\end{align}
Here, we put $\epsilon=\frac12$ in (3.15), and notice that 
$\eta=1$ on $B_r(x_0)$, and $\vert\nabla\eta\vert\leq \frac{2}{r}$. Then, we have 
\begin{align}
&\frac{8}{r^2}\int_M 
 \vert\overline{\Delta}\tau(\varphi)\vert^2\,v_g
 -\frac{4c}{r^2}\,\int_M\vert\tau(\varphi)\vert^4\,v_g\nonumber\\
 &\geq 
 \frac12
 \int_{B_r(x_0)}\vert\overline{\nabla}\,\overline{\Delta}\tau(\varphi)\vert^2\,v_g
 -\frac{c}{4}\,\int_{B_r(x_0)}
\vert\,\nabla\,\vert\tau(\varphi)\vert^2\,\vert^2\,v_g\nonumber\\
&\qquad
-c\int_{B_r(x_0)}\vert\tau(\varphi)\vert^2\,\,\vert\overline{\nabla}\,\tau(\varphi)\vert^2\,v_g.
\end{align}
\vskip0.6cm\par
({\it The seventh step})\quad 
By virtue of our assumptions that 
$\int_M\vert \overline{\Delta}\tau(\varphi)\vert^2\,v_g<\infty$ and 
$\int_M\vert\tau(\varphi)\vert^4\,v_g<\infty$, 
and $B_r(x_0)$ goes to $M$ if $r\rightarrow \infty$ because of completeness of $(M,g)$, the left hand side of (3.16) goes to zero if $r\rightarrow\infty$. We obtain 
\begin{align}
&\frac12
 \int_M\vert\overline{\nabla}\,\overline{\Delta}\tau(\varphi)\vert^2\,v_g
 -\frac{c}{4}\,\int_{M}
\vert\,\nabla\,\vert\tau(\varphi)\vert^2\,\vert^2\,v_g\nonumber\\
&-c\int_{M}\vert\tau(\varphi)\vert^2\,\,\vert\overline{\nabla}\,\tau(\varphi)\vert^2\,v_g\nonumber\\
&\leq 0. 
\end{align}
Since $c\leq0$, all the terms of (3.17) are non-negative and we have 
\begin{equation}
\overline{\nabla}\,\overline{\Delta}\tau(\varphi)=0. 
\end{equation}
In the case $c<0$, we have 
\begin{equation}
\left\{
\begin{aligned}
&\overline{\nabla}\,\overline{\Delta}\tau(\varphi)=0,\\
&\nabla\,\vert\tau(\varphi)\vert^2=0,\\
&\vert\tau(\varphi)\vert^2\,\vert\overline{\nabla}\tau(\varphi)\vert^2=0.
\end{aligned}
\right.
\end{equation}
Notice that by (3.18), $\vert\overline{\Delta}\tau(\varphi)\vert^2$ is constant, say $c_0$.
Because, for every $C^{\infty}$ vector field $X$ on $M$, by (3.18), 
$$X\,\vert \overline{\Delta}\tau(\varphi)\vert^2=2\,\langle\overline{\nabla}_X\,\overline{\Delta}\tau(\varphi),\overline{\Delta}\tau(\varphi)\rangle=0.$$ 
\vskip0.2cm\par
({\it The eighth step})\quad 
In the case that $c<0$, 
by the second equation of (3.19), $\vert \tau(\varphi)\vert^2$ is constant. 
Therefore, by the last equation of (3.19), 
it holds that 
$\vert \tau(\varphi)\vert\equiv 0$ or $\vert \overline{\nabla}\tau(\varphi)\vert^2=0$, 
i.e., $\overline{\nabla}\tau(\varphi)\equiv0$. We have 
$\overline{\Delta}\tau(\varphi)\equiv0$. 
Since $\varphi:\,(M,g)\rightarrow N(c)$ is triharmonic, (3.2) holds. 
Substituting $\overline{\Delta}\tau(\varphi)\equiv0$ in (3.2), 
we obtain $\tau(\varphi)\equiv0$ in the case of $c<0$. 
\par
In the case that $c=0$ and $\mbox{\rm Vol}(M,g)=\infty$, we have 
$$\infty>\widetilde{E}_4(\varphi)=E_4(\varphi)=\frac12\int_M\vert\overline{\Delta}\tau(\varphi)\vert^2\,v_g=c_0\,\mbox{\rm Vol}(M,g).$$
Thus, we obtain $c_0=0$, i.e., $\overline{\Delta}\tau(\varphi)\equiv0$, i.e., $\varphi$ is a biharmonic map of $(M,g)$ into the Euclidean space $N(0)$. Then, applying 
(2) in Theorem 2.1 of \cite{NU3}, or 
Theorem 3.1 in \cite{NU4}, $\varphi$ is harmonic, i.e., minimal, by virtue of the assumption that 
the bienergy $E_2(\varphi)=\frac12\int_M\vert \tau(\varphi)\vert^2\,$ is finite. 
\par
In the case that $c=0$ and $\mbox{\rm Vol}(M,g)<\infty$, we have  
$E(\varphi)=\frac{m}{2}\,\mbox{\rm Vol}(M,g)<\infty$ since $\varphi$ is an isometric immersion. 
Furthermore, we have that
$$
E_2(\varphi)=\frac12\int_M\vert\tau(\varphi)\vert^2\,v_g\leq 
\frac12\left(
\int_M1\,v_g\right)^{1/2}\,\left(\int_M
\vert\tau(\varphi)\vert^4\,v_g
\right)^{1/2}<\infty.
$$  
By virtue of the assumption that the 4-energy $E_4(\varphi)$ and 
the 3-energy 
$E_3(\varphi)=\frac12\int_M\vert\overline{\nabla}\tau(\varphi)\vert^2\,v_g$ are finite, 
we can apply again Theorem 3.1 in \cite{NU4}, 
and then we also obtain 
$\tau(\varphi)=0$, i.e., $\varphi$ is minimal. 
\par
We have Theorem 2.4. 
\end{proof}

\vskip0.6cm\par
\section{Triharmonic isometric immersions with the constant mean curvature}
In the case that $\vert \tau(\varphi)\vert $ is constant, that is, the mean curvature is constant, 
the finiteness of $\int_M\vert 
\overline{\Delta}\tau(\varphi)\vert^2\,v_g$ 
in Theorem 2.4 can be replaced into 
the weaker condition as follows.

\begin{thm}\label{sub Th}
Let $\varphi:\,(M,g)\rightarrow N(c)$ be an isometric immersion with the constant mean curvature from a complete Riemannian manifold $(M,g)$ into 
a Riemannian manifold $N(c)$ of negatively constant curvature $c$. 
If $\varphi$ is triharmonic and $\frac12\int_M\vert \overline{\Delta}\tau(\varphi)\vert^p\,v_g<\infty$  $(\text{for some}\ 2 \leq p<\infty)$, 
then $\varphi$ 
is harmonic, i.e., minimal. 
\end{thm}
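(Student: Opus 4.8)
The plan is to use the constancy of $|\tau(\varphi)|$ to turn the triharmonic equation into a pointwise differential inequality of Bochner type for $|\overline\Delta\tau(\varphi)|^2$, and then to run the cut-off scheme of the proof of Theorem 2.4 with the extra weight $|\overline\Delta\tau(\varphi)|^{p-2}$. First, since the mean curvature is constant, $|\tau(\varphi)|^2\equiv a^2$ for a constant $a\ge 0$, so, exactly as in the fourth step of the proof of Theorem 2.4, $\langle\tau(\varphi),\overline\Delta\tau(\varphi)\rangle=|\overline\nabla\tau(\varphi)|^2$. Put $\xi:=\overline\Delta\tau(\varphi)$ and decompose $\xi=\xi^\top+\xi^\perp$ into its components tangent and normal to $d\varphi(T_xM)$. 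Since in $N(c)$ one has $\sum_i R^N(\xi,d\varphi(e_i))d\varphi(e_i)=c\,(m\,\xi-\xi^\top)$ and $\tau(\varphi)$ is normal, the triharmonic equation $(3.2)$ (equivalently Corollary 2.3) becomes $\overline\Delta\xi=c\,m\,\xi-c\,\xi^\top+c\,a^2\,\tau(\varphi)$. Pairing this with $\xi$, using $\langle\xi^\top,\xi\rangle=|\xi^\top|^2$ and $\langle\tau(\varphi),\xi\rangle=|\overline\nabla\tau(\varphi)|^2$, and then the identity $\tfrac12\Delta|\xi|^2=\langle\overline\Delta\xi,\xi\rangle-|\overline\nabla\xi|^2$ (the one used in the fourth step of the proof of Theorem 2.4, now applied to $\xi$), I obtain
\[
\tfrac12\Delta|\xi|^2=c\,m\,|\xi|^2-c\,|\xi^\top|^2+c\,a^2\,|\overline\nabla\tau(\varphi)|^2-|\overline\nabla\xi|^2 .
\]
Since $c<0$ and $|\xi^\top|^2\le|\xi|^2$, one has $-c\,(m|\xi|^2-|\xi^\top|^2)\ge|c|\,(m-1)\,|\xi|^2$, so, transposing the sign-definite pieces,
\[
|\overline\nabla\xi|^2+|c|\,(m-1)\,|\xi|^2+|c|\,a^2\,|\overline\nabla\tau(\varphi)|^2\le-\tfrac12\Delta|\xi|^2 ,
\]
and for $m\ge 2$ every term on the left is non-negative.

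Next I would multiply this inequality by $\eta^2|\xi|^{p-2}$, with $\eta$ the cut-off function from the first step of the proof of Theorem 2.4 ($\eta\equiv1$ on $B_r(x_0)$, $\operatorname{supp}\eta\subset B_{2r}(x_0)$, $|\nabla\eta|\le 2/r$), and integrate over $M$. Integrating the right-hand side by parts produces $-\tfrac{p-2}{4}\int_M\eta^2|\xi|^{p-4}|\nabla|\xi|^2|^2-\int_M\eta|\xi|^{p-2}\langle\nabla|\xi|^2,\nabla\eta\rangle$; since $p\ge 2$ the first term is $\le 0$ and may be discarded, while the cross term is controlled by Kato's inequality $|\nabla|\xi|^2|\le 2|\xi|\,|\overline\nabla\xi|$ together with Young's inequality $2ab\le\varepsilon a^2+\varepsilon^{-1}b^2$ so as to absorb a fraction $\varepsilon$ of $\int_M\eta^2|\xi|^{p-2}|\overline\nabla\xi|^2$ back into the left side. (As in the proof of Theorem 2.4 one carries this out on the compact ball $\overline{B_{2r}(x_0)}$, where all integrals are finite and the absorption is legitimate; the apparent singularity of $|\xi|^{p-4}$ for $p<4$ is harmless because $|\xi|^{p-4}|\nabla|\xi|^2|^2=4|\xi|^{p-2}\bigl|\nabla|\xi|\bigr|^2$, or may be removed by replacing $|\xi|^2$ with $|\xi|^2+\delta$ and letting $\delta\to0$.) Choosing $\varepsilon=\tfrac12$ leaves
\[
\tfrac12\int_{B_r(x_0)}|\xi|^{p-2}|\overline\nabla\xi|^2\,v_g+|c|\,(m-1)\int_{B_r(x_0)}|\xi|^p\,v_g\le\frac{8}{r^2}\int_M|\xi|^p\,v_g .
\]
By hypothesis $\int_M|\overline\Delta\tau(\varphi)|^p\,v_g<\infty$, so the right-hand side tends to $0$ as $r\to\infty$, while $B_r(x_0)\uparrow M$ by completeness. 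Hence $\overline\nabla\xi\equiv0$ and, since $m\ge 2$ and $c<0$, also $\xi=\overline\Delta\tau(\varphi)\equiv0$; substituting this into $(3.2)$ gives $-c\,|\tau(\varphi)|^2\,\tau(\varphi)=0$, and $c\ne0$ forces $\tau(\varphi)\equiv0$, i.e.\ $\varphi$ is minimal.

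It remains to dispose of the case $m=1$, where $|c|(m-1)$ vanishes and the scheme above yields only $\overline\nabla\xi\equiv0$, hence $\overline\Delta\xi\equiv0$; pairing $(3.2)$ (in the form $\overline\Delta\xi=c\,\xi-c\,\xi^\top+c\,a^2\,\tau(\varphi)$) with $\xi$ and using $\overline\Delta\xi=0$, $\langle\tau(\varphi),\xi\rangle=|\overline\nabla\tau(\varphi)|^2$ and $|\xi|^2-|\xi^\top|^2=|\xi^\perp|^2$ gives $0=c\,|\xi^\perp|^2+c\,a^2\,|\overline\nabla\tau(\varphi)|^2$, and $c<0$ then forces $a\,\overline\nabla\tau(\varphi)\equiv0$, whence $\tau(\varphi)\equiv0$ (either $a=0$, or $\overline\nabla\tau(\varphi)\equiv0$ so $\overline\Delta\tau(\varphi)\equiv0$ and $(3.2)$ gives $\tau(\varphi)\equiv0$). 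The step I expect to be the main obstacle is the sign bookkeeping in the Bochner identity: the term $-c\,|\xi^\top|^2$ is, on its own, of the ``wrong'' sign and must be dominated by $c\,m\,|\xi|^2$ through $|\xi^\top|^2\le|\xi|^2$ — it is precisely here that the strict negativity $c<0$ (and, for the sharp conclusion $\xi\equiv0$, the dimension bound $m\ge 2$) enters — and then one must carry the weight $|\xi|^{p-2}$ correctly through the integration by parts so that the boundary term is genuinely absorbed.
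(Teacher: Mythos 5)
Your proposal is correct and rests on the same analytic engine as the paper's proof: the cut-off function $\eta$, the weight $\vert\overline{\Delta}\tau(\varphi)\vert^{p-2}$, Kato plus Young with $\epsilon=\tfrac12$ to absorb the gradient term, and $r\to\infty$ using $\int_M\vert\overline{\Delta}\tau(\varphi)\vert^p\,v_g<\infty$. The two genuine differences are in organization and in how the conclusion is extracted. First, you package the computation as a pointwise Bochner inequality for $\vert\xi\vert^2$ with $\xi=\overline{\Delta}\tau(\varphi)$ and then multiply by $\eta^2\vert\xi\vert^{p-2}$, whereas the paper tests the equation $(3.2)$ directly against $\vert\xi\vert^{p-2}\xi\,\eta^2$ and integrates by parts once; these are equivalent, and your regularization remark for $2<p<4$ is the right way to make the pointwise version rigorous. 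Second, and more substantively, you \emph{keep} the curvature term and extract the coercive piece $\vert c\vert(m-1)\vert\xi\vert^2$ from $-c\,(m\vert\xi\vert^2-\vert\xi^\top\vert^2)$, which for $m\geq2$ yields $\xi\equiv0$ outright and bypasses the paper's Lemma 4.2 and its Case (I)/(II) analysis; the paper instead discards that term as merely non-negative and closes the argument with Lemma 4.2 applied twice (to $\overline{\Delta}\tau(\varphi)$ and then to $\tau(\varphi)$), which works uniformly in $m$. The price you pay is the separate $m=1$ discussion, and there one small point should be tightened: for $p>2$ the conclusion $\vert\xi\vert^{p-2}\vert\overline{\nabla}\xi\vert^2\equiv0\Rightarrow\overline{\nabla}\xi\equiv0$ is not immediate (it could fail on the boundary of $\{\xi=0\}$); you need precisely the paper's Lemma 4.2, which shows $\vert\xi\vert$ is constant so that either $\xi\equiv0$ (and $(3.2)$ gives $\tau(\varphi)=0$) or $\overline{\nabla}\xi\equiv0$, after which your pairing argument finishes the case. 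With that one-line patch your proof is complete, and for $m\geq2$ it is arguably cleaner than the paper's, since it quantifies the positivity of the curvature term rather than discarding it.
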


Before mentioning the proof of Theorem $\ref{sub Th}$, 
we shall show the following lemma.

\begin{lem}\label{4.key lemma}
Assume that 
$\alpha\in \Gamma(\varphi^{-1}TN)$ 
satisfies that 
$$
\vert\alpha\vert^q\,\,\vert \overline{\nabla}\alpha\vert^2=0
\qquad (\mbox{for some $q>0$}).
$$
Then, $(1)$
$\vert \alpha\vert$ is constant everywhere on $M$,
and then \par\quad \,\,\,
$(2)$ 
either $\alpha=0$ or $\overline{\nabla}\alpha=0$.  
\end{lem}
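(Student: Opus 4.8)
The plan is to prove Lemma~\ref{4.key lemma} by a pointwise argument combined with a connectedness argument on $M$. First I would observe that the hypothesis $\vert\alpha\vert^q\,\vert\overline\nabla\alpha\vert^2=0$ holds at every point $x\in M$, so at each $x$ either $\vert\alpha(x)\vert=0$ or $\vert\overline\nabla\alpha\vert(x)=0$. Set $Z=\{x\in M:\alpha(x)=0\}$; this is closed. I claim $M=Z$ or $Z=\varnothing$. Indeed, on the open set $U=M\setminus Z$ we have $\vert\alpha\vert>0$, hence $\vert\alpha\vert^q>0$ there, and the hypothesis forces $\vert\overline\nabla\alpha\vert\equiv0$ on $U$; therefore $\overline\nabla\alpha=0$ on $U$, so $X\vert\alpha\vert^2=2\langle\overline\nabla_X\alpha,\alpha\rangle=0$ for every vector field $X$, i.e.\ $\vert\alpha\vert$ is locally constant on $U$. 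Thus on each connected component of $U$, $\vert\alpha\vert$ equals a positive constant; in particular such a component cannot have a boundary point lying in $Z$ (where $\vert\alpha\vert=0$), since $\vert\alpha\vert$ is continuous on $M$. Hence $U$ is both open and closed in $M$, so by connectedness of $M$ either $U=\varnothing$ (i.e.\ $\alpha\equiv0$) or $U=M$.

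Next I would assemble the two conclusions. If $U=\varnothing$ then $\alpha\equiv0$, so $\vert\alpha\vert\equiv0$ is constant and $\overline\nabla\alpha=0$ — both (1) and (2) hold trivially. If $U=M$ then, as shown above, $\overline\nabla\alpha=0$ everywhere, which immediately gives (2), and then $X\vert\alpha\vert^2=2\langle\overline\nabla_X\alpha,\alpha\rangle=0$ for all $X$ shows $\vert\alpha\vert$ is constant on the connected manifold $M$, giving (1). In either case $\vert\alpha\vert$ is constant on $M$ and either $\alpha=0$ or $\overline\nabla\alpha=0$, which is exactly the assertion of the lemma.

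One subtlety worth flagging: the argument uses that $M$ is connected, which is part of the standing hypothesis that $(M,g)$ is a Riemannian manifold (one typically assumes manifolds are connected; if not, the statement should be read componentwise, and the proof applies on each component). A second point is that I only need $q>0$ so that $\vert\alpha\vert>0\Rightarrow\vert\alpha\vert^q>0$; no integrability or completeness assumption enters here — this is a purely algebraic/topological dichotomy, and that is why the lemma is stated in this clean form. The only place any real care is needed is the connectedness step, i.e.\ checking that a component of $U$ cannot abut $Z$; this follows because $\vert\alpha\vert$ is continuous and takes a fixed positive value on that component but the value $0$ on $Z$. Everything else is routine.

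\begin{proof}
Since the manifold $M$ is connected, it suffices to prove the two assertions on $M$ itself. At every point $x\in M$ the hypothesis gives $\vert\alpha(x)\vert^q\,\vert\overline\nabla\alpha\vert^2(x)=0$, hence either $\vert\alpha(x)\vert=0$ or $\vert\overline\nabla\alpha\vert(x)=0$. Let $U=\{x\in M:\alpha(x)\neq0\}$, which is open. On $U$ we have $\vert\alpha\vert>0$, so $\vert\alpha\vert^q>0$ on $U$, and therefore the hypothesis forces $\vert\overline\nabla\alpha\vert\equiv0$ on $U$, that is, $\overline\nabla\alpha=0$ on $U$. Consequently, for every smooth vector field $X$ on $M$,
$$
X\,\vert\alpha\vert^2=2\,\langle\overline\nabla_X\alpha,\alpha\rangle=0\qquad\text{on }U,
$$
so $\vert\alpha\vert$ is locally constant on $U$; on each connected component of $U$ it equals a positive constant. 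Since $\vert\alpha\vert$ is continuous on $M$ and vanishes on $M\setminus U$, no connected component of $U$ can have a boundary point in $M$. Hence $U$ is closed in $M$ as well, and by connectedness of $M$ either $U=\varnothing$ or $U=M$.

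If $U=\varnothing$, then $\alpha\equiv0$ on $M$, so $\vert\alpha\vert\equiv0$ is constant and $\overline\nabla\alpha=0$; both $(1)$ and $(2)$ hold. If $U=M$, then $\overline\nabla\alpha=0$ everywhere on $M$, which is $(2)$, and then $X\,\vert\alpha\vert^2=2\,\langle\overline\nabla_X\alpha,\alpha\rangle=0$ for all $X$ shows that $\vert\alpha\vert$ is constant on the connected manifold $M$, which is $(1)$. This completes the proof.
\end{proof}
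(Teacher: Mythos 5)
Your proof is correct, but it follows a genuinely different route from the paper's. The paper argues analytically: it first establishes the Kato-type inequality $\vert\nabla\vert\alpha\vert\vert\le\vert\overline{\nabla}\alpha\vert$ (valid where $\alpha\neq0$), deduces $\vert\alpha\vert^{q}\,\vert\nabla\vert\alpha\vert\vert^{2}=0$, rewrites this as $\bigl(\tfrac{2}{q+2}\bigr)^{2}\bigl\vert\nabla\vert\alpha\vert^{\frac{q}{2}+1}\bigr\vert^{2}=0$, and concludes that $\vert\alpha\vert^{\frac{q}{2}+1}$, hence $\vert\alpha\vert$, is a constant $C_{0}$; the dichotomy $(2)$ then falls out by substituting $C_{0}$ back into the hypothesis. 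You instead argue topologically: on the open set $U=\{\alpha\neq0\}$ the hypothesis forces $\overline{\nabla}\alpha=0$, so $\vert\alpha\vert$ is a positive constant on each component of $U$, and continuity of $\vert\alpha\vert$ together with the vanishing on $M\setminus U$ shows each component of $U$ is clopen; connectedness of $M$ then gives $U=\varnothing$ or $U=M$. Both arguments use connectedness of $M$ (the paper needs it to pass from $\nabla\vert\alpha\vert^{\frac{q}{2}+1}=0$ to constancy), so your flag on that point applies equally to the original. Your version has the mild advantage of avoiding any discussion of the differentiability of $\vert\alpha\vert^{\frac{q}{2}+1}$ across the zero set of $\alpha$, which the paper's computation quietly glosses over when $0<q<2$; the paper's version has the advantage of producing the constancy of $\vert\alpha\vert$ in one line without a case analysis on components. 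One small ordering remark: your sentence ``hence $U$ is closed'' is best justified \emph{after} invoking connectedness (each component of $U$ is clopen, so in a connected $M$ any nonempty component already equals $M$); a union of infinitely many closed components need not be closed a priori, though this does not affect your conclusion.
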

\begin{proof}
We first notice that for every $\alpha\in \Gamma(\varphi^{-1}TN)$, it holds that 
\begin{align}\label{4.1}
\vert \,\nabla\,\vert\alpha\vert\,\vert\leq \vert\overline{\nabla}\alpha\vert
\qquad (\mbox{everywhere on the set $\{x\in M\vert\,\vert \alpha_x\not=0\}$}).
\end{align} 
Because 
\begin{align}\label{4.2}
\vert\alpha\vert\,\vert\nabla\,\vert\alpha\vert\,\vert&=\frac12\,\vert\nabla\,\vert\alpha\vert^2\,\vert\nonumber\\
&=\frac12\,\vert\,\nabla\,h(\alpha,\alpha)\vert\nonumber\\
&=\vert h(\overline{\nabla}\alpha,\alpha)\vert\nonumber\\
&\leq \vert\,\overline{\nabla}\alpha\vert\,\vert\alpha\vert,
\end{align}
so we obtain $(\ref{4.1})$ due to $(\ref{4.2})$. 
\medskip\par
Therefore, we have 
\begin{align}
0\leq \vert\alpha\vert^q\,\vert\nabla\,\vert\alpha\vert\,\vert^2\leq \vert\alpha\vert^q\,\vert\overline{\nabla}\alpha\vert^2=0
\end{align}
everywhere on $M$. 
Thus, we have 
\begin{align}\label{4.4}
\vert\alpha\vert^q\,\vert\,\nabla\,\vert\alpha\vert\,\vert^2=0. 
\end{align}
By $(\ref{4.4})$, 
we have 
\begin{align}
\bigg(\frac{2}{q+2}
\bigg)^2\,\big\vert
\nabla\,\vert\alpha\vert^{\frac{q}{2}+1}
\big\vert^2
&=\bigg(\frac{2}{q+2}
\bigg)^2\,\bigg\vert
\bigg(\frac{q}{2}+1
\bigg)\,\vert\alpha\vert^{q/2}\,\nabla\,\vert\alpha\vert
\bigg\vert^2\nonumber\\
&=\vert\alpha\vert^q\,\vert\nabla\,\vert\alpha\vert\,\vert^2\nonumber\\
&=0.
\end{align}
We have 
\begin{align}
\nabla\,\vert\alpha\vert^{\frac{q}{2}+1}=0, 
\end{align}
which implies that 
$\vert\alpha\vert^{q/2+1}$ is constant, i.e., $\vert\alpha\vert$ is a constant, say $C_0$. 
Then, 
\par
$(1)$ in the case that $C_0=0$, we have $\alpha=0$. 
$(2)$ In the case that $C_0\not=0$, 
we have 
\begin{align}
C_0{}^q\,\vert\overline{\nabla}\alpha\vert^2&=\vert\alpha\vert^q\,\vert\overline{\nabla}\alpha\vert^2\nonumber\\
&=0
\end{align}
by virtue of the assumption of Lemma $\ref{4.key lemma}$. We obtain 
$\overline{\nabla}\alpha=0$.
\end{proof}

By using Lemma $\ref{4.key lemma}$, we shall show Theorem $\ref{sub Th}$.

\begin{proof}[Proof of Theorem $\ref{sub Th}$]
We will use an argument similar to the proof of Theorem~$\ref{main Th}$.
\par
We take the cut-off function in the first step of the proof of Theorem $\ref{main Th}$.
By taking the inner product of (3.2) and $\vert \overline{\Delta}\tau(\varphi)\vert^{p-2}\, \overline{\Delta}\tau(\varphi)\,\eta^2$ 
and integrate over $M$, so we have 
\begin{align}\label{1}
&\int_M\langle\overline{\Delta}^2(\tau(\varphi)),\,\vert \overline{\Delta}\tau(\varphi)\vert^{p-2}\,\overline{\Delta}\tau(\varphi)\,\eta^2\rangle\,v_g\nonumber\\
&\quad-\int_M
\sum_{i=1}^m
\langle R^N(\overline{\Delta}\tau(\varphi),d\varphi(e_i))d\varphi(e_i),\vert \overline{\Delta}\tau(\varphi)\vert^{p-2}\,\overline{\Delta}\tau(\varphi)\rangle\,\eta^2\,v_g\nonumber\\
&\quad-c\,\int_M\langle\tau(\varphi),\tau(\varphi)\rangle\,\langle \tau(\varphi),\vert \overline{\Delta}\tau(\varphi)\vert^{p-2}\,\overline{\Delta}\tau(\varphi)\rangle\,\eta^2\,v_g\nonumber\\
&\quad= 0.
\end{align}
\vskip0.6cm\par
For the first term of the left hand side of $(\ref{1})$,  
\begin{align}\label{2}
&\int_M\langle\overline{\Delta}^2(\tau(\varphi)),\,\vert \overline{\Delta}\tau(\varphi)\vert^{p-2}\,\overline{\Delta}\tau(\varphi)\,\eta^2\rangle\,v_g\notag\\
&\qquad\qquad=\int_M\sum_{i=1}^m\langle 
\overline{\nabla}_{e_i}(\overline{\Delta}\tau(\varphi)),\overline{\nabla}_{e_i}(\vert \overline{\Delta}\tau(\varphi)\vert^{p-2}\,\overline{\Delta}\tau(\varphi)\,\eta^2)\rangle\,v_g.
\end{align}
Here we use 
$$
\nabla_{e_i} \vert \overline{\Delta}\tau(\varphi)\vert^{p-2}
=(p-2)\vert \overline{\Delta}\tau(\varphi)\vert^{p-4} \langle \overline{\nabla}_{e_i}\overline{\Delta}\tau(\varphi),\overline{\Delta}\tau(\varphi)\rangle,
$$
 and then the right hand side of $(\ref{2})$ coincides with 
\begin{align}\label{3}
\int_M
&\vert \overline{\Delta}\tau(\varphi)\vert^{p-2}\,\vert\overline{\nabla}\,\overline{\Delta}\tau(\varphi)\vert^2\,\eta^2\,v_g\nonumber\\
&+2\int_M\sum_{i=1}^m \langle 
\eta\,\vert \overline{\Delta}\tau(\varphi)\vert^{\frac{p}{2}-1}\,\overline{\nabla}_{e_i}(\overline{\Delta}\tau(\varphi)),\nabla_{e_i}\eta\,\vert \overline{\Delta}\tau(\varphi)\vert^{\frac{p}{2}-1}\,\overline{\Delta}\tau(\varphi)\rangle\,v_g\nonumber\\
&+(p-2)\int_M\sum_{i=1}^m \vert \overline{\Delta}\tau(\varphi)\vert^{p-4}
\,\langle 
\overline{\nabla}_{e_i}(\overline{\Delta}\tau(\varphi)),\overline{\Delta}\tau(\varphi)\rangle^2\,\eta^2v_g.
\end{align}
For the second term of $(\ref{3})$, 
put 
$S_i:=\eta\,\vert \overline{\Delta}\tau(\varphi)\vert^{\frac{p}{2}-1}\,\overline{\nabla}_{e_i}(\overline{\Delta}\tau(\varphi))$, and 
$T_i:=\nabla_{e_i}\eta\,\vert \overline{\Delta}\tau(\varphi)\vert^{\frac{p}{2}-1}\,\overline{\Delta}\tau(\varphi)$ ($i=1\,\cdots,m$), and 
recall the Young's inequality: 
for every $\epsilon>0$, 
$$
\pm2\langle S_i,T_i\rangle\leq \epsilon\,\vert S_i\vert^2+\frac{1}{\epsilon}\,\vert T_i\vert^2,
$$    
because of the inequality 
 $
 0\leq \vert \sqrt{\epsilon}\,S_i\pm\frac{1}{\sqrt{\epsilon}}\,T_i\vert^2.  
 $
 Therefore, $(\ref{3})$ is bigger than or equal to
 \begin{align}\label{key i-1}
 \int_M&\vert \overline{\Delta}\tau(\varphi)\vert^{p-2} \vert\overline{\nabla}\,\overline{\Delta}\tau(\varphi)\vert^2\,\eta^2\,v_g\nonumber\\
 &\qquad\qquad-\bigg\{
 \epsilon\,\int_M\eta^2\,\vert \overline{\Delta}\tau(\varphi)\vert^{p-2}\vert \overline{\nabla}\,\overline{\Delta}\tau(\varphi)\vert^2\,v_g
 +\frac{1}{\epsilon}\,\int_M 
 \vert\nabla\eta\vert^2\,\vert\overline{\Delta}\tau(\varphi)\vert^p\,v_g
 \bigg\}\nonumber\\
 &=(1-\epsilon)
 \int_M\eta^2\,\vert \overline{\Delta}\tau(\varphi)\vert^{p-2}\vert \overline{\nabla}\,\overline{\Delta}\tau(\varphi)\vert^2\,v_g
 -\frac{1}{\epsilon}\,\int_M 
 \vert\nabla\eta\vert^2\,\vert\overline{\Delta}\tau(\varphi)\vert^p\,v_g.
 \end{align}
\vskip0.6cm\par
 For the second term of the left hand side of $(\ref{1})$, by the same reason of the third step of Theorem $\ref{main Th}$, we have
\begin{align}\label{key i-2}
&-\int_M\sum_{i=1}^m\langle R^N(\overline{\Delta}\tau(\varphi),d\varphi(e_i))d\varphi(e_i),\overline{\Delta}\tau(\varphi) \rangle \vert \overline{\Delta}\tau(\varphi)\vert^{p-2}\,\eta^2\geq 0.
\end{align}
\vskip0.6cm\par
For the third term of the left hand side of $(\ref{1})$, 
since the mean curvature is constant, 
$$\langle \tau(\varphi),\overline{\Delta}\tau(\varphi)\rangle=\vert\overline{\nabla}\tau(\varphi)\vert^2.$$ 
By using this, the third term of the left hand side of $(\ref{1})$ is equal to 
\begin{align}\label{key i-3}
&-c\,\int_M\langle\tau(\varphi),\tau(\varphi)\rangle\,\langle \tau(\varphi),\overline{\Delta}\tau(\varphi)\rangle\,\vert \overline{\Delta}\tau(\varphi)\vert^{p-2}\,\eta^2\,v_g\notag\\
&\qquad=-c\,\int_M\vert \overline{\Delta}\tau(\varphi)\vert^{p-2} \vert\tau(\varphi)\vert^2\,\vert\overline{\nabla}\tau(\varphi)\vert^2\,\eta^2\,v_g.
\end{align}
Combining $(\ref{key i-1})$, $(\ref{key i-2})$ and $(\ref{key i-3})$, and noticing that $\eta=1$ on $B_{r}(x_0)$, we have
\begin{align}\label{key i last}
&(1-\epsilon)
 \int_{B_{r}(x_0)}\eta^2\,\vert \overline{\Delta}\tau(\varphi)\vert^{p-2}\vert \overline{\nabla}\,\overline{\Delta}\tau(\varphi)\vert^2\,v_g\notag\\
 &\quad-c\,\int_{B_{r}(x_0)}\vert \overline{\Delta}\tau(\varphi)\vert^{p-2} \vert\tau(\varphi)\vert^2\,\vert\overline{\nabla}\tau(\varphi)\vert^2\,\eta^2\,v_g\notag\\
 &\quad\leq \frac{1}{\epsilon}\,\int_M 
 \vert\nabla\eta\vert^2\,\vert\overline{\Delta}\tau(\varphi)\vert^p\,v_g\notag\\
  &\quad\leq \frac{1}{\epsilon}\frac{4}{r^2}\,\int_M 
\vert\overline{\Delta}\tau(\varphi)\vert^p\,v_g,
\end{align}
where the last inequality follows from $\vert \nabla \eta\vert \leq \frac{2}{r}$.
By virtue of our assumptions that 
$\int_M\vert \overline{\Delta}\tau(\varphi)\vert^p\,v_g<\infty$ 
and $B_r(x_0)$ goes to $M$ if $r\rightarrow \infty$ because of completeness of $(M,g)$, the right hand side of $(\ref{key i last})$ goes to zero if $r\rightarrow\infty$. We obtain 
\begin{align}\label{key i last 2}
(1-\epsilon)
 \int_M \vert \overline{\Delta}\tau(\varphi)\vert^{p-2}\vert \overline{\nabla}\,\overline{\Delta}\tau(\varphi)\vert^2\,v_g
 - c\,\int_M\vert \overline{\Delta}\tau(\varphi)\vert^{p-2} \vert\tau(\varphi)\vert^2\,\vert\overline{\nabla}\tau(\varphi)\vert^2\,v_g \leq 0.
\end{align}
Since we can take that $0<\epsilon<1$, and the assumption $c<0$, all the terms of $(\ref{key i last 2})$ are non-negative and we have 
\begin{equation}\label{c2}
\left\{
\begin{aligned}
&\vert \overline{\Delta}\tau(\varphi)\vert^{p-2}\vert \overline{\nabla}\,\overline{\Delta}\tau(\varphi)\vert^2=0,\\
&\vert \overline{\Delta}\tau(\varphi)\vert^{p-2} \vert\tau(\varphi)\vert^2\,\vert\overline{\nabla}\tau(\varphi)\vert^2=0.
\end{aligned}
\right.
\end{equation}

 If we put $\alpha:=\overline{\Delta}\,\tau(\varphi)$, 
 $(\ref{c2})$ is equivalent to that 
 \begin{align}\label{c21}
&
 \vert\alpha\vert^{p-2}\,\vert\overline{\nabla}\alpha\vert^2=0,
 \end{align}
and
\begin{align}\label{c22}
 \vert\alpha\vert^{p-2}\,\vert\tau(\varphi)\vert^2\,\vert\overline{\nabla}\tau(\varphi)\vert^2=0.
\end{align}
 Applying Lemma $\ref{4.key lemma}$, for putting $q:=p-2>0$, by $(\ref{c21})$, 
 we have 
 $\vert\overline{\Delta}\,\tau(\varphi)\vert =\vert\alpha\vert$ is a constant, say $C_1$. 
 \par
 In the Case (I): $C_1=0$, we have 
 $$
 \overline{\Delta}\,\tau(\varphi)=\alpha=0\qquad (\mbox{everywhere on }M.) 
 $$
 By vanishing of the tritension field (3.2), we have $\tau(\varphi)=0$ on $M$. 
 \par
 In the Case (II): $C_1\not=0$, we have by $(\ref{c22})$, 
 \begin{align}\label{c23}
 \vert\tau(\varphi)\vert^2\,\vert\overline{\nabla}\,\tau(\varphi)\vert^2=0.
\end{align}
 By applying Lemma $\ref{4.key lemma}$ for $\alpha:=\tau(\varphi)$ and $q=2$, 
 we have 
 $\vert\tau(\varphi)\vert$ is a constant, say $C_2$. 
 In the case (II-i) $C_2=0$, we clearly have $\tau(\varphi)=0$ on $M$. 
 In the case (II-ii) $C_2\not=0$, 
 we have $\overline{\nabla}\,\tau(\varphi)=0$ on $M$ by virtue of $(\ref{c23})$. 
   Then, we have $\overline{\Delta}\tau(\varphi)=0$ on $M$ which contradicts that $C_1\not=0$. 
   This case (II-ii) does not occur. 
\end{proof}

\begin{rem}
 (1) Let $\varphi:(M,g)\rightarrow N(0)=\mathbb{E}^n$ be an isometric immersion of a complete Riemannian manifold $(M,g)$ into the Euclidean space $N(0)$ with $\vert \tau(\varphi) \vert$ is constant. 
 In the case that $\mbox{\rm Vol}(M,g)<\infty$, if $\varphi$ is triharmonic and both $\int_M\vert \tau(\p)\vert^p\,v_g<\infty$ (for some $2\leq p<\infty$) and the 3-energy is finite, then $\varphi$ is harmonic, i.e., minimal (cf. \cite{SM1}).

(2) In the case that ${\rm Vol}(M,g)=\infty$,  
if $\varphi:(M,g)\rightarrow (N,h)$ is a smooth map from a Riemannian manifold $(M,g)$ into a Riemannian manifold $(N,h)$ with $\vert \tau(\varphi) \vert$ is constant and 
 $\int_{M}\vert \tau(\varphi)\vert^p\, v_g<\infty$ (for some $0< p<\infty$), then $\varphi$ is harmonic.
\par
\end{rem}

\vskip0.6cm\par







\end{document}